\newcommand{\R}{\mathbb{R}}
\newcommand{\N}{\mathbb{N}}
\newcommand{\Z}{\mathbb{Z}}
\newcommand{\cO}{\mathcal{O}}
\newcommand{\cM}{\mathcal{M}}
\newcommand{\cH}{\mathcal{H}}
\newcommand{\cE}{\mathcal{E}}
\newcommand{\pr}{\mathbb{P}}
\newcommand{\ddbar}{\sqrt{-1} \partial \overline{\partial}}
\newcommand{\om}{\omega}
\DeclareMathOperator{\ord}{ord}
\newtheorem{theorem}{Theorem}[section]
\newtheorem{lemma}[theorem]{Lemma}
\newtheorem{corollary}[theorem]{Corollary}
\newtheorem{proposition}[theorem]{Proposition}
\numberwithin{equation}{section}
\theoremstyle{definition}
\newtheorem{remark}[theorem]{Remark}
\theoremstyle{definition}
\begin{document}

\title{Constant scalar curvature K\"ahler metrics on ramified Galois coverings}

\begin{abstract}

 We give sufficient conditions for the existence of K\"ahler-Einstein and constant scalar curvature K\"ahler (cscK) metrics on finite ramified Galois coverings of a cscK manifold in terms of cohomological conditions on the K\"ahler classes and the branching divisor. This result generalizes  previous work on K\"ahler-Einstein metrics by Li-Sun \cite{LS}, and extends Chen-Cheng's existence results for cscK metrics in \cite{CC}. 

\end{abstract}

\author{Claudio Arezzo}
\address{ICTP and Universit\`a di Parma}
\email{arezzo@ictp.it}
\author{Alberto Della Vedova}
\address{Universit\`a di Milano - Bicocca}
\email{alberto.dellavedova@unimib.it}
\author{Yalong Shi}
\address{NJU}
\email{shiyl@nju.edu.cn}
\maketitle
\thispagestyle{empty}

\tableofcontents

\section{Introduction}

In this paper we provide a new geometric construction of K\"ahler-Einstein and constant scalar curvature K\"ahler (cscK) metrics on finite ramified Galois coverings of cscK manifolds. In order to state our results let us first recall the notion of a Galois cover.

Let $M, N$ be complex manifolds of dimension $n$. A holomorphic map $f:M\to N$ is called ``finite", if it is proper and has finite fibers. If $f$ is moreover surjective, it is called an ``analytic covering".  For such an analytic covering $f$, we can define the Galois group
$$Gal(f):=\{\gamma\in Aut(M)\ |\ f\circ\gamma=f\}.$$
$f$ is called a ``Galois covering" if any two points of $M$ lie on the same fiber of $f$ is and if they belong to the same $Gal(f)$-orbit. In other words, $N\cong M/Gal(f)$.

Now we assume both $M$ and $N$ are smooth compact K\"ahler manifolds, and $N$ has a constant scalar curvature K\"ahler metric (``cscK metric" for short) with K\"ahler form $\om_N$. If $f: M\to N$ is an unramified (or ``\'etale") Galois covering, then $f^*\om_N$ is a cscK metric on $M$. The central question addressed in this paper is whether (or when)  cscK metrics still exist in the class $[f^*\om_N]$ also when $f$ is ramified. Note that in this case while $f^*\om_N$ is degenerate, thanks to Demailly-Paun's result \cite{DeP}, the class $[f^*\om_N]$ is still a K\"ahler class (see Section $2$). 

When both $M$ and $N$ are Fano manifolds and $\om_N$ is K\"ahler-Einstein, this problem is well studied and partial resullts have been found by Arezzo-Ghigi-Pirola \cite{AGP}, Ghigi-Kollar \cite{GhKo} and Li-Sun \cite{LS}. Their methods are all variational, using the (non-trivial) fact that the properness of Ding-functional (resp. log-K-energy) implies the existence of K\"ahler-Einstein metrics (resp. conical K\"ahler-Einstein metrics).

A recent breakthrogh in K\"ahler geometry is Chen-Cheng's work \cite{CC}, showing that the properness of K-energy in a general K\"ahler class implies the existence of K\"ahler metrics of constant scalar curvature. In this paper, we shall use a ($G$-invariant version) of Chen-Cheng's theorem
(Theorem \ref{thm:ChenCheng}) to prove that the above mentioned theorems on K\"ahler-Einstein metrics generalize to the cscK setting.

To state our main theorem we introduce the following notations: let $f: M\to N$ be a finite Galois covering of degree $d$ with Galois group $G$. Denote the ramification divisor by $R_f=\sum_j r_j R_j$. It is known that locally on the regular part of $R_j$, $f$ is of the form $(z_1,\dots, z_n)\mapsto (z_1,\dots, z_{n-1}, z_n^{r_j+1})$ under appropriate holomorphic coordinates (Lemma 5 of \cite{GhKo}). So we have $K_M=f^*K_N+R_f$. As in \cite{GhKo}, we have $R_f=f^*B_f$ where $B_f=\sum_i\Big(1-\frac{1}{|\Gamma_i|}\Big)B_i$ is the branching divisor. Here $\Gamma_i:=\{\gamma\in Gal(f)\ |\ R_j\subset Fix(\gamma)\}$ is the inertia subgroup of any $R_j$ such that $f(R_j)=B_i$. Write $d_i:=|\Gamma_i|$.  Note that we do not assume $R_f$ or $B_f$ is simple normal crossing but we only require that both $M$ and $N$ are smooth.

The main result of this paper is then the following theorem:

\begin{theorem}\label{thm:Galois}
Let $(N,\om_N)$ be a $n$-dimensional compact K\"ahler manifold, and $f:M\to N$ a finite ramified Galois covering, branched along  divisor $B_f = \sum_{i=1}^k (1-\frac{1}{d_i})B_i$ of $N$ with ramification divisor $R_f$. Assume that the $K$-energy on $N$ of the class $[\om_N]$ is bounded from below (For example, if $[\om_N]$ has a cscK metric). Write $B_\delta:=\delta\sum_{i=1}^k (1-\frac{1}{d_i})B_i$ and assume that for some $\delta_0\in [1,\min\{\frac{d_i}{d_i-1}\})$, the classes $K_N+B_{\delta_0}$ and
$$\frac{n(K_N+B_{\delta_0})\cdot [\om_N]^{n-1}}{[\om_N]^n}[\om_N]-(n-1)(K_N+B_{\delta_0})$$
are both nef. Then there  exists a cscK metric on $M$ in the K\"ahler class $[f^*\om_N]$.
\end{theorem}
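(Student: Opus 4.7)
The plan is to use the $G$-equivariant form of Chen--Cheng's existence theorem (Theorem~\ref{thm:ChenCheng}) to reduce the proof to showing that the K-energy $\cM_M$ on $M$ is proper when restricted to $G$-invariant K\"ahler potentials in $[f^*\om_N]$. Since the class $[f^*\om_N]$ is manifestly $G$-invariant, any cscK representative can be averaged against $G$ to produce a $G$-invariant one, so a $G$-equivariant properness statement suffices.

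The core reduction is to identify the restriction of $\cM_M$ to $G$-invariant potentials with the log K-energy $\cM_{(N,B_f)}$ of the log pair $(N,B_f)$. For $\varphi \in \cH_{\om_N}$ the pullback $f^*\varphi$ is smooth and $G$-invariant on $M$, and using the ramification formula $K_M = f^*K_N + R_f$ together with the local normal form $(z_1,\dots,z_n) \mapsto (z_1,\dots,z_{n-1},z_n^{r_j+1})$ along each component $R_j$, the Chen--Tian decompositions of the two K-energies match term-by-term to give
\[
\cM_M(f^*\varphi) \;=\; d \cdot \cM_{(N,B_f)}(\varphi).
\]
A density argument (Demailly regularization combined with $G$-averaging) then promotes this identity to a comparison on all of $\cH^G_{f^*\om_N}$, so it suffices to prove properness of $\cM_{(N,B_f)}$ on $\cH_{\om_N}$.

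To establish the latter, I would write
\[
K_N + B_f \;=\; (K_N + B_{\delta_0}) - (\delta_0 - 1)\sum_i \Bigl(1-\tfrac{1}{d_i}\Bigr) B_i,
\]
which is meaningful since $\delta_0 \geq 1$. The first nef hypothesis implies that the twisted $J$-functional $J_{K_N + B_{\delta_0}}$ is bounded below, while the second is precisely the Song--Weinkove/Chen J-equation criterion, forcing $J_{K_N + B_{\delta_0}}$ to dominate $C^{-1} J_{\om_N} - C$ for some $C > 0$. Combined with the hypothesis that the K-energy of $[\om_N]$ on $N$ is bounded below (which controls the ``untwisted'' entropy contribution in the log Chen--Tian decomposition), these ingredients yield properness of $\cM_{(N,B_f)}$ with respect to $J_{\om_N}$, provided one absorbs the contribution of the subtracted effective class. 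This last absorption is the main obstacle: it relies on the strict inequality $\delta_0 < \min_i \{d_i/(d_i-1)\}$, which gives $(\delta_0 - 1)(1 - 1/d_i) < 1/d_i$ and hence keeps the pair $(N,B_{\delta_0})$ KLT, making the corresponding $J$-contribution integrable and dominated by the proper terms. Once properness is established, Theorem~\ref{thm:ChenCheng} produces a smooth $G$-invariant cscK metric on $M$ in the class $[f^*\om_N]$.
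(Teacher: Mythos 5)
Your overall architecture matches the paper's: reduce via the equivariant Chen--Cheng theorem to properness of the K-energy on $G$-invariant potentials, identify that restriction with $d$ times the log K-energy of $(N,B_f)$ via the ramification formula, and then prove properness of the log K-energy downstairs. However, the step where you actually produce properness has a genuine gap. Coercivity cannot come from the $J$-functional under merely \emph{nef} hypotheses: the two nef conditions on $K_N+B_{\delta_0}$ only guarantee, after perturbing the class by $\epsilon[\om_N]$, that the twisted functional $\hat J_{-\Ric(\om_N)+\sum(1-\beta_i)\Theta_{h_i}+\epsilon\om_N}$ is \emph{bounded below} (via solvability of the corresponding $J$-equation); they do not force it to dominate $C^{-1}(I-J)-C$ with a strictly positive constant, and your claimed domination $J_{K_N+B_{\delta_0}}\geq C^{-1}J_{\om_N}-C$ would require strict positivity rather than nefness. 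In the paper the strictly positive coefficient in front of $(I-J)$ comes from an entirely different source that your proposal never invokes: the positivity of the log-$\alpha$-invariant $\alpha(\om_N,B_{\delta_0})$, which via Jensen's inequality bounds the entropy-plus-log term below by $\tfrac{(n+1)\alpha}{n}(I-J)-C$, with $\epsilon$ then chosen smaller than $\tfrac{(n+1)\alpha}{n}$ (this is the content of Lemma \ref{lem:general log LSY}, the log version of the Li--Shi--Yao criterion). Your appeal to the klt condition $\delta_0<\min_i d_i/(d_i-1)$ is pointed at the right hypothesis but at the wrong functional: it is what makes $\prod_i|s_i|^{-2\delta_0(1-1/d_i)}$ integrable and hence the log-$\alpha$-invariant positive, not an issue of "integrability of the $J$-contribution."

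Two further points. First, your passage from $B_{\delta_0}$ back to $B_f=B_1$ by "absorbing the subtracted effective class" is where the hypotheses that $\delta_0\geq 1$ and that the K-energy of $[\om_N]$ is bounded below actually enter, and the paper does this by an exact affine interpolation: $K_{B_1,\om_N}=\tfrac{1}{\delta_0}K_{B_{\delta_0},\om_N}+(1-\tfrac{1}{\delta_0})K_{\om_N}$, a convex combination of a proper functional and a bounded-below one. Your version uses the K-energy lower bound to "control the untwisted entropy contribution," which is not a well-defined step; you should replace it by this interpolation. Second, your density argument upstairs glosses over a real difficulty: a $G$-invariant potential on $M$ descends only to a continuous $\om_N$-psh function on $N$ that is smooth off the branch locus, and the entropy is merely lower semicontinuous in the $d_1$-topology, so one cannot simply pass to the limit along an arbitrary regularization. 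The paper handles this by working in $\mathcal E^1(N,\om_N)$ and invoking the Berman--Darvas--Lu approximation along which the entropies converge (Lemmas \ref{lem:potential} and \ref{lem:approx_E1}); some such device is needed to make your reduction rigorous.
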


The simplest examples of ramified Galois coverings are finite cyclic coverings branching along an irreducible divisor (which is necessarily smooth by Catanese \cite[Proposition 1.1]{Cat}). In this special case, Theorem \ref{thm:Galois} reduces to:

\begin{corollary}\label{coro:cyclic}
Let $(N,\om_N)$ be a $n$-dimensional compact cscK manifold, and $f:M\to N$ a finite cyclic cover of order $d$, branched along an irreducible divisor $B\subset N$ with ramification divisor $R_f$. If the K\"ahler class $[\om_N]$ and $[B]$ satisfy the condition that there is a positive constant $\beta_0\in (0,\frac{1}{d}]$ such that both $K_N+(1-\beta_0)B$ and
 $$\frac{n\big(K_N+(1-\beta_0)B\big)\cdot [\om_N]^{n-1}}{[\om_N]^n}[\om_N]-(n-1)\big(K_N+(1-\beta_0)B\big)$$ are nef,
then there  exists a cscK metric on $M$ in the K\"ahler class $[f^*\om_N]$.
\end{corollary}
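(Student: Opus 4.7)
The plan is to deduce Corollary \ref{coro:cyclic} as a direct specialization of Theorem \ref{thm:Galois}, so the proof will essentially be a bookkeeping argument that translates the hypotheses rather than a genuinely new computation. First I would verify that a finite cyclic cover of order $d$ branched along an irreducible divisor $B$ does fit into the Galois framework of the theorem: the Galois group is $\Z/d\Z$, the branching locus has a single component $B_1 = B$ (automatically smooth by Catanese's result \cite[Proposition 1.1]{Cat}), the inertia subgroup of the unique component $R_1$ of the ramification divisor is the full cyclic group so $d_1 = d$, and hence the branching divisor is $B_f = (1-\frac{1}{d})B$. Additionally, the hypothesis that $(N,\om_N)$ is cscK ensures the $K$-energy on $N$ in the class $[\om_N]$ is bounded below (indeed it is proper modulo automorphisms, and in particular bounded below), so the standing hypothesis of Theorem \ref{thm:Galois} holds.

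Next I would set up the change of variables that matches the two parameter ranges. Given $\beta_0\in(0,\tfrac{1}{d}]$ as in the corollary, define
\[
\delta_0 := \frac{d(1-\beta_0)}{d-1}.
\]
A direct check shows $\delta_0\in[1, \frac{d}{d-1})$ exactly when $\beta_0\in(0,\frac{1}{d}]$, and that with this choice
\[
B_{\delta_0} = \delta_0\Bigl(1-\tfrac{1}{d}\Bigr)B = (1-\beta_0)B.
\]
Consequently $K_N + B_{\delta_0} = K_N + (1-\beta_0)B$, and the auxiliary class in Theorem \ref{thm:Galois} becomes exactly the one appearing in the corollary.

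With these identifications in place, the two nefness hypotheses of the corollary are literally the two nefness hypotheses of the theorem for this specific $\delta_0$. Applying Theorem \ref{thm:Galois} then yields a cscK metric on $M$ in the K\"ahler class $[f^*\om_N]$, which is the desired conclusion. I do not anticipate any genuine obstacle in carrying out this reduction; the only point that warrants care is confirming the endpoints of the two parameter intervals correspond correctly (in particular that $\beta_0 = \frac{1}{d}$ matches $\delta_0 = 1$, and that $\beta_0 \to 0^+$ corresponds to $\delta_0 \to \frac{d}{d-1}$), and that the irreducibility of $B$ is truly what reduces the sum in $B_f$ to a single term.
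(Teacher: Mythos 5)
Your reduction is correct and is exactly how the paper obtains the corollary: it is presented there as the immediate specialization of Theorem \ref{thm:Galois} to a single branch component, and your substitution $\delta_0 = \frac{d(1-\beta_0)}{d-1}$, which carries $\beta_0\in(0,\tfrac{1}{d}]$ bijectively onto $\delta_0\in[1,\tfrac{d}{d-1})$ and turns $B_{\delta_0}$ into $(1-\beta_0)B$, is precisely the intended bookkeeping. The only point deserving a passing remark is your assertion that the inertia group is all of $\Z/d\Z$: if the cover factored through a nontrivial \'etale intermediate cover one would have $d_1<d$, but since $(0,\tfrac{1}{d}]\subset(0,\tfrac{1}{d_1}]$ the same argument with $d$ replaced by $d_1$ still yields an admissible $\delta_0$, so the conclusion is unaffected.
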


The idea of the proof to Theorem \ref{thm:Galois} is clearest in the case of simple cyclic cover, i.e. Corollary \ref{coro:cyclic}. In this case, one should have cscK metrics with cone angle $2\pi\beta$ along $B$ for any $\beta_0\leq\beta<1$. Then the pull back of the conical cscK metric with angle $\frac{2\pi}{d}$ will give rise to a smooth cscK metric in the class $[f^*\om_N]$. This is indeed true in the conical K\"ahler-Einstein case as observed in \cite{LS}. Even though there are already systematic studies on conic cscK metrics (e.g. \cite{Zh}), we can in fact avoid such ``conical cscK metrics" altogether except the concepts of ``log $K$-energy" and ``log-$\alpha$-invariant". 

Our strategy is as follows: 
\begin{align*}
\text{cscK on N}&\Longrightarrow \text{K-energy bounded from below on N}\\
&\Longrightarrow \text{log K-energy proper on N}\\
&\Longrightarrow \text{K-energy proper in the space of $G$-invariant potentials on M}.
\end{align*}

The first step is already well known. The main part of our proof is devoted to step 2 and step 3. 

\begin{remark}
When $B_f$ is simple normal crossing, one can use Zheng's theorem on conical cscK metrics \cite{Zh}. Our method avoids analytical difficulties involving conical metrics. Moreover, our theorem works even if $B_f$ is not simple normal crossing, while a suitable ``conical theory" is still missing.
\end{remark}

\begin{remark}
In view of the recent work of Z. Sj\"ostr\"om Dyrefelt \cite{SD} and K. Zhang \cite{Zha}, our main result can be improved. See, for example Remark \ref{rk:Zak}. However, we stick to this weaker version since the conditions here are much easier to check. 
\end{remark}

When $f$ is a cyclic covering branched over a smooth hypersurface $B$, and $[\om_N]$ is proportional to $c_1(B)$, we can strengthen Corollary \ref{coro:cyclic} using Berman's estimate of log-$\alpha$-invariants \cite{Ber}: 

\begin{theorem}\label{thm:cyclic}
Let $(N,\om_N)$ be a $n$-dimensional compact K\"ahler manifold, and $f:M\to N$ a finite cyclic Galois covering, branched along a smooth  hypersurface $B$ of $N$. Assume $\om_N$ is a cscK metric on $N$ with $[\om_N]=\lambda 2\pi c_1(B)$ for some $\lambda>0$. If both $K_N+B$ and 
$$\Big(\frac{nK_N\cdot B^{n-1}}{B^n}+1\Big)B-(n-1)K_N$$ are nef, then there exists a cscK metric on $M$ in the class $[f^*\om_N]$.
\end{theorem}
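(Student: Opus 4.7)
The plan is to mimic the proof of Corollary \ref{coro:cyclic} at the ``forbidden'' limiting parameter $\beta_0=0$. The key role of $\beta_0>0$ in that corollary is to provide a nontrivial lower bound on the log-$\alpha$-invariant $\alpha(N,(1-\beta)B;\om_N)$: for a general K\"ahler class $[\om_N]$ this lower bound degenerates to $0$ as $\beta\to 0$, which is why the nef hypotheses must be sharpened by the $\beta_0$-shift. Under the proportionality hypothesis $[\om_N]=\lambda\, 2\pi c_1(B)$ we can instead use Berman's sharper estimate \cite{Ber}, which yields a linear-in-$\beta$ bound
\begin{equation*}
\alpha\bigl(N,(1-\beta)B;\om_N\bigr)\ge c_{\lambda,n}\,\beta,\qquad \beta\in(0,1),
\end{equation*}
for some $c_{\lambda,n}>0$ depending only on $\lambda$ and $n$. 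This is precisely the input needed to drop the $\beta_0$-shift from the nef conditions.

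I would begin by checking that, under the proportionality hypothesis, the two nef conditions in Theorem \ref{thm:cyclic} are exactly the ``$\beta_0=0$'' specialization of those in Corollary \ref{coro:cyclic}. The first, $K_N+(1-\beta_0)B$, becomes $K_N+B$. For the second, substituting $[\om_N]^{k}=(2\pi\lambda)^{k}B^{k}$ and expanding $(K_N+B)\cdot B^{n-1}=K_N\cdot B^{n-1}+B^n$ reduces
\begin{equation*}
\frac{n(K_N+B)\cdot[\om_N]^{n-1}}{[\om_N]^n}[\om_N]-(n-1)(K_N+B)
\end{equation*}
directly to $\bigl(\tfrac{nK_N\cdot B^{n-1}}{B^n}+1\bigr)B-(n-1)K_N$, exactly as stated.

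With this algebraic reduction in hand I would follow the three-step outline given after Corollary \ref{coro:cyclic}. Step 1 is immediate: the assumed cscK metric $\om_N$ makes $\nu_{\om_N}$ bounded below on $\mathcal{H}(\om_N)$. Step 3 is the $G$-equivariant Chen-Cheng theorem (Theorem \ref{thm:ChenCheng}), applied after the standard identification of $G$-invariant potentials on $M$ with log-singular potentials on $(N,(1-1/d)B)$; once properness of the log K-energy is known, Chen-Cheng delivers the desired cscK metric in $[f^*\om_N]$.

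The substance is Step 2: the properness of $\nu_{\om_N,(1-1/d)B}$ on $\mathcal{H}(\om_N)$. I would expand this functional exactly as in the proof of Corollary \ref{coro:cyclic} into a curvature-type term (made nonnegative by the two nef hypotheses via the algebraic reduction above), the ordinary K-energy $\nu_{\om_N}$ (bounded below by Step 1), and a divisorial contribution whose coercivity is controlled by $\alpha(N,(1-\beta)B;\om_N)$. The corollary handles the divisorial term by requiring $\beta_0>0$ and using only the trivial positivity of the log-$\alpha$-invariant; the strengthening here substitutes Berman's $c_{\lambda,n}\beta$ bound, whose linear gain is designed precisely to absorb the loss from dropping the $\beta_0$-shift. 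The main obstacle is matching the quantitative form of Berman's estimate to this deficit term by term; once this matching is verified, the rest of Step 2 is a direct specialization of the corollary's computation to $\beta=1/d$.
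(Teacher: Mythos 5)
Your ingredients are the right ones --- Berman's log-$\alpha$ estimate under the proportionality hypothesis, the log-version of the Li--Shi--Yao properness criterion (Lemma \ref{lem:general log LSY}), the algebraic reduction of the two nef classes (which you carry out correctly), and the equivariant Chen--Cheng theorem --- but the way you propose to close Step 2 has a genuine gap. Berman's estimate is not a clean linear bound $\alpha(\om_N,(1-\beta)B)\ge c_{\lambda,n}\beta$; it is
$$\alpha(\om_N,(1-\beta)B)\ \ge\ \min\Bigl\{\tfrac{\beta}{\lambda},\ \alpha([\om_N]),\ \alpha([\om_N]|_B)\Bigr\},$$
and the last two entries are fixed constants that may be arbitrarily small. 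To run Lemma \ref{lem:general log LSY} at a given $\beta$ with the unshifted nef hypotheses, you must choose $\epsilon$ with $\lambda\epsilon>\beta$ (so that $K_N+(1-\beta+\lambda\epsilon)B$ and the second class become strictly positive, using the ampleness of $B$) while keeping $\frac{n\epsilon}{n+1}<\alpha(\om_N,(1-\beta)B)$; this forces $\alpha(\om_N,(1-\beta)B)>\frac{n}{n+1}\cdot\frac{\beta}{\lambda}$. A ``direct specialization of the corollary's computation to $\beta=1/d$'' therefore fails whenever $\min\{\alpha([\om_N]),\alpha([\om_N]|_B)\}\le\frac{n}{(n+1)\lambda d}$, which the hypotheses of the theorem do not exclude.

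The missing idea is a two-stage argument. First apply the criterion only for $\beta>0$ sufficiently small: there the minimum in Berman's bound equals $\beta/\lambda$, and the margin $\frac{n+1}{n}>1$ leaves a nonempty window $\epsilon\in(\frac{\beta}{\lambda},\frac{n+1}{n}\frac{\beta}{\lambda})$, so $K_{(1-\beta)B,\om_N}$ is proper for all small $\beta$. Then use that $K_{(1-\beta)B,\om_N}$ is affine in the coefficient $1-\beta$, and write $K_{(1-1/d)B,\om_N}$ as a convex combination of $K_{(1-\beta)B,\om_N}$ (small $\beta$, proper) and $K_{\om_N}$ (the $\beta=1$ endpoint, bounded below since $\om_N$ is cscK); properness at $\beta=1/d$ follows. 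This is where the lower bound of the ordinary K-energy actually enters --- as the second endpoint of an interpolation, not as one summand of a single-shot estimate at $\beta=1/d$ as in your three-way decomposition. With this correction the rest of your outline (Lemma \ref{lem:K for Galois}, Lemma \ref{lem:approx_E1}, Theorem \ref{thm:ChenCheng}) goes through exactly as in the paper.
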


This contains as a special case Theorem 6.1 of Li-Sun \cite{LS}, which covers the case of $N$ Fano and $\om_N$ K\"ahler-Einstein. In fact, if  $B\in |-\beta K_N|$ with $\beta\geq 1$, both conditions are satisfied, we can apply Theorem \ref{thm:cyclic} to get the K\"ahler-Einstein metric on $M$.\\

The structure of the paper is as follows. In section \ref{sec:preliminaries}, we discuss basic properties of $K$-energy on the space of potentials with finite energies. Most crucial is Lemma \ref{lem:approx_E1}, saying that to check properness of the $K$-energy on $\mathcal{E}^1$ space, it suffices to check it on the space of smooth potentials. Then in section \ref{sec:K-energy}, we compute the K-energy in the space of $G$-invariant potentials and relate it to the log-K-energy downstairs. Theorem \ref{thm:Galois} is proved in section \ref{sec:Main}, using a log-version of Li-Shi-Yao's criterion for properness of the K-energy (which is independently discovered by K. Zheng in the conic setting), as well as an equivariant version of Chen-Cheng's theorem. This generalized version of Chen-Cheng's theorem is essentially contained in \cite{CC}. However, since it is neither stated explicitly, nor implied directly by their theorems, we decide to include a sketch of proof for the convenience of readers. Then in section \ref{sec:ThmCyclic}, making use of Berman's estimate of the log-$\alpha$ invariant, we prove Theorem \ref{thm:cyclic}.
In the last Section \ref{sec:example}, we provide large families of new examples both of K\"ahler-Einstein and constant scalar curvature K\"ahler (cscK) metrics and compare on explicit cases our results with other known existence Theorems, notably Li-Sun \cite{LS} and Chen-Cheng's \cite{CC} ones.

{\bf{Aknowledgements:}} The fist author is very grateful to Fabio Perroni for beautiful explanations of the Catanese-Perroni's theory largely used in the last Section to produce many interesting examples. The third author thanks K. Zheng for helpful discussions on conical metrics. This work began when the second and third authors were both visiting ICTP, they would like to thank ICTP for its hospitality.

\section{Preliminaries}\label{sec:preliminaries}

Before entering the analytical details, let us observe that 
the class $[f^*\om_N]$ is indeed a K\"ahler class even when $f$ has a non-empty ramification.
In fact, when $N$ is projective algebraic, this is standard result, see, for example, Proposition 1.2.13 of \cite{La}. In the general case, we can apply Demailly-Paun's Theorem \cite{DeP} generalizing the Nakai-Moishezon criterion, stating that hat the K\"ahler cone is one of the connected components of the set of all real $(1,1)$-class $\alpha$ such that $\int_V \alpha^{\dim V}>0$ for any analytic cycle $V$. In our case it is immediate to observe that 
$$\int_V (f^*\om_N)^p=d\int_{f(V)}\om_N^p>0,$$
since $f(V)$ is a $p$-dimensional analytic cycle of $N$.
Since $f^*\om_N\geq 0$, it is in the closure of the K\"ahler cone and hence in the K\"ahler cone.

Let us now introduce the spaces of K\"ahler potentials we shall use: let $X$ be a compact K\"ahler manifold and $\omega$ be a smooth non-negative closed $(1,1)$ form with $[\om]$ is a K\"ahler class. We define
$$\cH(X, \omega):=\{\varphi\in C^\infty(X;\R)\ |\ \om+\ddbar\varphi>0\},$$
and
$$PSH(X,\omega):=\{\varphi\in L^1(X;\R)\ |\ \om+\ddbar\varphi\geq 0\}.$$
Here $L^1(X;\R)$ is defined with respect to any smooth volume form, and ``$\om+\ddbar\varphi\geq 0$" is in the sense of currents.  Another important space is the space of potentials with finite energy, defined as follows: write $\om_\varphi:=\om+\ddbar\varphi$
$$\cE^1(X,\om):=\Big\{\varphi\in L^1(X;\R)\ \Big|\ \int_X \om_\varphi^n=\int_X\om^n,\quad \int_X |\varphi|\om_\varphi^n<+\infty\Big\},$$ 
where $\om_\varphi^n:=\lim_{j\to\infty} \mathbf{1}_{\varphi>-j}\Big(\om+\ddbar\max\{\varphi,-j\}\Big)^n$ is defined as the pluripolar product. 

Obviously, $\cH(X,\om)\subset PSH(X,\om)\cap C^0(X;\R)\subset \cE^1(X,\om)\subset PSH(X,\om)$. It is proved by Darvas that $\cE^1(X,\om)$ is the metric completion of $\cH(X,\om)$ with respect to the $L^1$-Finsler metric on $\cH$. The $\cE^1$ potentials appear naturally in our problem in view of the following lemma:

\begin{lemma}\label{lem:potential}
Let $f: M\to N$ be a Galois covering between compact K\"ahler manifolds with Galois group $G$ and branching locus $B_{red}$. Let $\om_N$ be a K\"ahler form on $N$. Then for any $G$-invariant $\varphi\in\cH(M,f^*\om_N)$, we can find a function $\phi\in C^0(N;\R)\cap C^\infty(N\setminus B_{red};\R)\cap PSH(N,\om_N)$ such that $\varphi=f^*\phi=\phi\circ f$. In particular, $\phi\in\cE^1(N,\om_N)$.
\end{lemma}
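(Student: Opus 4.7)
The plan is to define $\phi$ by descending $\varphi$ along $f$, verify smoothness and $\om_N$-plurisubharmonicity on the open locus where $f$ is \'etale, and then extend across the branch divisor via the classical removable singularity theorem for $\om_N$-plurisubharmonic functions.

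For the descent, set $\phi(y) := \varphi(x)$ for any $x \in f^{-1}(y)$; this is well-defined since $\varphi$ is $G$-invariant and, $f$ being Galois, $G$ acts transitively on each fibre. Because $f$ is continuous, proper, surjective and $G$-invariant, it realizes $N$ as the topological quotient $M/G$, so continuity of $\varphi$ transfers to continuity of $\phi$ on $N$; in particular $\phi$ is bounded, since $N$ is compact.

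On $N \setminus B_{red}$, the map $f \colon M \setminus R_{red} \to N \setminus B_{red}$ is an unramified Galois covering, hence a local biholomorphism. Pulling back by local inverses shows that $\phi$ is smooth on $N \setminus B_{red}$, and the identity $f^*(\om_N + \ddbar\phi) = f^*\om_N + \ddbar\varphi > 0$ together with the local injectivity of $f^*$ on $(1,1)$-forms gives $\om_N + \ddbar\phi > 0$ on $N \setminus B_{red}$. To upgrade this to $\om_N + \ddbar\phi \geq 0$ as a current on the whole of $N$, I would invoke the standard removable singularity theorem for quasi-plurisubharmonic functions: any function that is $\om_N$-psh and locally bounded above on the complement of a closed analytic subset extends uniquely to an $\om_N$-psh function on the whole manifold. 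Since $\phi$ is continuous (hence bounded) and already defined on all of $N$, this extension must coincide with $\phi$, yielding $\phi \in C^0(N;\R) \cap C^\infty(N \setminus B_{red};\R) \cap \PSH(N,\om_N)$.

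For the $\cE^1$ conclusion, boundedness of $\phi$ means that the non-pluripolar Monge--Amp\`ere product $\om_\phi^n$ reduces to the classical Bedford--Taylor product; its total mass equals $\int_N \om_N^n$ by the usual Stokes argument, and $\int_N |\phi|\,\om_\phi^n \leq \|\phi\|_{L^\infty(N)} \int_N \om_N^n < \infty$. The only genuinely non-trivial input in the argument is the removable singularity theorem used to extend plurisubharmonicity across $B_{red}$; however, it is routine once one knows that $\phi$ is bounded and that $B_{red}$ is analytic, both of which come for free from the quotient structure.
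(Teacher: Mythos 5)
Your argument is correct, but the key step is handled differently from the paper. The paper establishes $\phi\in \PSH(N,\om_N)$ globally in one stroke by observing that the direct image $f_*\bigl(f^*\om_N+\ddbar\varphi\bigr)$ equals $d(\om_N+\ddbar\phi)$ and is a K\"ahler current, citing Lemma 2.6 of \cite{AGP}; the positivity across $B_{red}$ is thus inherited from the pushforward of a positive current. You instead argue locally: positivity on the \'etale locus via local inverses of $f$, followed by the classical removable singularity theorem for quasi-plurisubharmonic functions that are locally bounded above near a closed analytic set, with continuity of $\phi$ forcing the upper-semicontinuous extension to coincide with $\phi$ itself. Both routes are sound. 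Yours is more self-contained (it avoids the external pushforward lemma and makes explicit why positivity propagates across the branch locus), at the cost of having to note that $B_{red}$ is analytic (Remmert) and to pass through local potentials of $\om_N$; the paper's route is shorter and also records the useful multiplicative identity $f_*(f^*\om_N+\ddbar\varphi)=d(\om_N+\ddbar\phi)$, which reappears in spirit in the K-energy computation of Lemma \ref{lem:K for Galois}. Your treatment of the $\cE^1$ membership is likewise fine, though the paper gets it for free from the inclusion $\PSH(N,\om_N)\cap C^0(N;\R)\subset\cE^1(N,\om_N)$ already recorded in Section \ref{sec:preliminaries}.
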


\begin{proof}
The existence of $\phi\in C^0(N;\R)\cap C^\infty(N\setminus B_{red};\R)$ such that $\varphi=f^*\phi=\phi\circ f$ is obvious. Also $\om_N+\ddbar\phi>0$ on $N\setminus B_{red}$. To show $\phi\in PSH(N,\om_N)$, just observe that the direct image of the K\"ahler form $f^*\om_N+\ddbar\varphi$ is precisely $d(\om_N+\ddbar\phi)$, which is a K\"ahler current by Lemma 2.6 of \cite{AGP}.
\end{proof}

For a $\mathbb{Q}$-divisor of the form $D=\sum_{i}(1-\beta_i)D_i$ on $N$, and let the defining section of $D_i$ be $s_i$ with metric $h_i$ on the bundle $\mathcal{O}(D_i)$. We can define the log-K-energy by
\begin{equation}
K_{D,\omega_N}(\varphi):=\int_N\log\frac{\om^n_{\varphi}}{\om^n_N}\frac{\om^n_{\varphi}}{n!}+\sum_i(1-\beta_i)\int_N \log \|s_i\|_{h_i}^2\Big(\frac{\om^n_{\varphi}}{n!}-\frac{\om^n_N}{n!}\Big)+\hat J_{-Ric(\om_N)+\sum_i(1-\beta_i)\Theta_{h_i}}(\varphi).
\end{equation}

\begin{lemma}\label{lem:approx_E1}
To check the properness of log-K-energy on the space of $\mathcal{E}^1$ potentials, it suffices to check the properness on the space of smooth potentials.
\end{lemma}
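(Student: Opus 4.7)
The plan is to use Darvas's theorem that $(\cE^1(N,\om_N), d_1)$ is the metric completion of $(\cH(N,\om_N), d_1)$, together with continuity properties of the log-$K$-energy along suitable approximating sequences. Given $\vp \in \cE^1(N,\om_N)$, I would produce smooth $\vp_j \in \cH(N,\om_N)$ with $\vp_j \to \vp$ in $d_1$ (e.g.\ via Demailly's decreasing regularization) chosen so that $K_{D,\om_N}(\vp_j) \to K_{D,\om_N}(\vp)$ and $J(\vp_j) \to J(\vp)$. Granting this, the hypothesized properness $K_{D,\om_N}(\vp_j) \geq F(J(\vp_j)) - C$ on smooth potentials (with $F(t) \to \infty$ as $t \to \infty$, after the standard $\sup \vp = 0$ normalization) passes to the limit and yields $K_{D,\om_N}(\vp) \geq F(J(\vp)) - C$, which is the desired properness on $\cE^1$.

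The technical substance lies in producing these recovery sequences, i.e.\ in establishing $d_1$-continuity (along Demailly approximations) of each of the three pieces of $K_{D,\om_N}$. The Monge-Amp\`ere entropy $\int_N \log(\om_\vp^n/\om_N^n)\,\om_\vp^n/n!$ is $d_1$-lower semicontinuous on $\cE^1$ and continuous along decreasing approximations by Berman-Darvas-Lu and Berman-Boucksom-Eyssidieux-Guedj-Zeriahi. The linear $\hat J_\chi$-term is $d_1$-continuous on $\cE^1$ by standard pluripotential theory (in fact by quasi-triangle inequalities relating it to $d_1$). For the singular log-term $\sum_i (1-\beta_i) \int_N \log \|s_i\|_{h_i}^2 (\om_\vp^n - \om_N^n)$ I would rewrite it as a difference of pluripotential energies associated with the closed $(1,1)$-current $\Theta_{h_i}$, and invoke known $d_1$-continuity results for such log-energy functionals (as in Di Nezza-Guedj-Lu) together with a Skoda-type uniform integrability bound of $\log \|s_i\|_{h_i}^2$ against $\om_{\vp_j}^n$.

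The main obstacle is this last piece: the factor $\log \|s_i\|_{h_i}^2$ is only $L^1$, so its integration against $\om_{\vp}^n$ cannot be controlled by the weak convergence of Monge-Amp\`ere measures alone. This is precisely where one needs log-pluripotential theory rather than the smooth case; once that $d_1$-continuity is secured, the passage from properness on $\cH$ to properness on $\cE^1$ collapses to the one-line $\lim$ argument indicated in the first paragraph.
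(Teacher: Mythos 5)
Your overall skeleton coincides with the paper's: fix the properness inequality $K_{D,\om_N}\geq \epsilon(I-J)-C$ on $\cH(N,\om_N)$, approximate a given $\varphi\in\cE^1$ by smooth potentials in $d_1$ using a Berman--Darvas--Lu recovery sequence along which the entropy converges (you are right that plain lower semicontinuity of the entropy goes the wrong way here), use $d_1$-continuity of $I-J$ and of the linear $\hat J$-term, and pass to the limit. The divergence is in how the singular term $\sum_i(1-\beta_i)\int_N\log\|s_i\|_{h_i}^2\,\om_\varphi^n$ is handled, and this is exactly the step you flag as ``the main obstacle'' and leave to external machinery (Di Nezza--Guedj--Lu type $d_1$-continuity plus Skoda-type uniform integrability). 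That machinery is not needed, and your diagnosis that ``one needs log-pluripotential theory rather than the smooth case'' is not accurate: to transfer the lower bound from $\varphi_k$ to $\varphi$ one only needs the one-sided inequality $\limsup_k\int\log\|s_i\|^2\,\om_{\varphi_k}^n\leq\int\log\|s_i\|^2\,\om_\varphi^n$ (the coefficients $1-\beta_i$ are positive), i.e.\ upper semicontinuity of the log term, not full continuity. Since $\log\|s_i\|_{h_i}^2$ is upper semicontinuous and bounded above, this follows from weak convergence of the Monge--Amp\`ere measures alone (Darvas, Corollary 5.8): replace the weight by the continuous truncation $\log(\|s_i\|_{h_i}^2+\delta)$, pass to the limit in $k$, and then let $\delta\to 0$ by monotone convergence. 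This is precisely the paper's argument. So your proposal is correct in outline and would presumably close if the continuity results you invoke apply in this setting, but as written it leaves the decisive step unproved and aims at a strictly stronger statement than the lemma requires; the elementary truncation argument both fills the gap and shows why no log-pluripotential continuity theory is necessary.
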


\begin{proof}
Suppose we have a uniform estimate of the form
$$K_{D,\om_N}(\varphi)\geq \epsilon (I-J)(\varphi)-C$$
on $\cH(N,\om_N)$.
Now let $\varphi\in \cE^1(N,\om_N)$ . We can find a sequence $\varphi_i\in\cH(N,\om_N)$ converging to $\varphi$ in the $\cE^1$-topology.

Now it is well-known that $(I-J)(\varphi_i)\to (I-J)(\varphi)$. We need to show that
$$K_{D,\om_N}(\varphi)\geq \liminf_{i\to\infty} K_{\beta,\om_N}(\varphi_i).$$
Then from
$$K_{D,\om_N}(\varphi_i)\geq \epsilon (I-J)(\varphi_i)-C$$
we will conclude that
$$K_{D,\om_N}(\varphi)\geq \epsilon (I-J)(\varphi)-C$$
holds for $\cE^1$ potential $\varphi$.
However, the K-energy is only lower semi-continuous with respect to the $d_1$-distance. So we need Berman-Darvas-Lu's result (Lemma 3.1 of \cite{BDL}): there is a sequence of smooth potentials $\varphi_k$ such that
$d_1(\varphi_k,\varphi)\to 0$ and $$\int_N\log\frac{\om^n_{\varphi_k}}{\om^n_N}\frac{\om^n_{\varphi_k}}{n!}\to\int_N\log\frac{\om^n_{\varphi}}{\om^n_N}\frac{\om^n_{\varphi}}{n!}, (k\to\infty).$$
Now both $I-J$ and $\hat J_{-Ric(\om_N)+\sum(1-\beta_i)\Theta_{h_i}}$ are continuous on $\mathcal{E}^1$ with respect to the $d_1$ distance (see, for example, Lemma 5.23 of \cite{DarRub}), so 
$$(I-J)(\varphi_k)\to (I-J)(\varphi),\quad \hat J_{-Ric(\om_N)+\sum(1-\beta_i)\Theta_{h_i}}(\varphi_i)\to \hat J_{-Ric(\om_N)+\sum(1-\beta_i)\Theta_{h_i}}(\varphi),\quad (k\to\infty).$$
Finally we need to cope with the terms $\int_N \log \|s_i\|_{h_i}^2\frac{\om^n_{\varphi}}{n!}$. For simplicity, we omit the subscript $i$ in the following arguments.  We shall use the fact the $d_1$ convergence implies the weak convergence of Monge-Amper\`e measures (Corollary 5.8 of \cite{Dar}). So for the continuous function $\log (\|s\|_h^2+\delta)$ ($\delta>0$), we have
$$\int_N \log (\|s\|_h^2+\delta)\frac{\om^n_{\varphi_k}}{n!}\to \int_N \log (\|s\|_h^2+\delta)\frac{\om^n_{\varphi}}{n!}, \quad (k\to\infty).$$
So we conclude that
\begin{eqnarray*}
& & K_{D,\omega_N}(\varphi)+\sum_i(1-\beta_i)\int_N \Big(\log (\|s_i\|_{h_i}^2+\delta)-\log \|s_i\|_{h_i}^2\Big)\frac{\om^n_{\varphi}}{n!}\\
&=& \lim_{k\to\infty} \Big(K_{D,\omega_N}(\varphi_k)+\sum_i(1-\beta_i)\int_N \Big(\log (\|s_i\|_{h_i}^2+\delta)-\log \|s_i\|_{h_i}^2\Big)\frac{\om^n_{\varphi_k}}{n!}\Big)\\
&\geq& \liminf_{k\to\infty}K_{\beta,\omega_N}(\varphi_k)\\
&\geq &\liminf_{k\to\infty} \epsilon (I-J)(\varphi_k)-C\\
&=& \epsilon (I-J)(\varphi)-C.
\end{eqnarray*}
Finally, let $\delta\to 0$ and use the monotone convergence theorem, we get $K_{D,\om_N}(\varphi)\geq \epsilon (I-J)(\varphi)-C$.
\end{proof}

\section{K-energy under Galois coverings}\label{sec:K-energy}

Let $f: M\to N$ be a finite Galois covering of degree $d$ with Galois group $G$ with ramification divisor  $R_f$ and branching divisor $B_f=\sum_i\Big(1-\frac{1}{d_i}\Big)B_i$. To study the behavior of K-energy under Galois coverings, it is convenient to use the degenerate reference form $f^*\om_N$ as the reference form in the definition of K-energy. This causes no trouble, since for smooth $\varphi$ satisfying $\om_{\varphi}:=\om_0+\ddbar\varphi>0$, set $\psi:=\varphi+\varphi_0$, then for any $\epsilon>0$ and sufficiently small, we have
$f^*\om_N+\epsilon\ddbar\varphi_0=(1-\epsilon)f^*\om_N+\epsilon\om_0>0$. Since the K-energy satisfies the cocycle conditions, we get
\begin{eqnarray*}
K_{\om_0}(\varphi)&=:&\cM(\om_0,\om_\varphi)=\cM(\om_0,f^*\om_N+\epsilon\ddbar\varphi_0)-\cM(\om_\varphi,f^*\om_N+\epsilon\ddbar\varphi_0)\\
&=& K_{\om_0}(-(1-\epsilon)\varphi_0)-K_{\om_\varphi}(-\psi+\epsilon\varphi_0).
\end{eqnarray*}
Using Chen-Tian's formula, it is easy to see that when $\epsilon\to 0$, both terms have finite limits: the entropy term converges by bounded convergence theorem and the convergence of the $\hat J$ term is trivial.  Then we can write this as a generalized cocycle condition:
\begin{eqnarray*}
K_{\om_0}(\varphi)=K_{\om_0}(-\varphi_0)+K_{f^*\om_N}(\psi),
\end{eqnarray*}
where we define $K_{f^*\om_N}(\psi)$ as
$$K_{f^*\om_N}(\psi):=-K_{\om_\varphi}(-\psi):=-\lim_{\epsilon\to 0+}K_{\om_\varphi}(-\psi+\epsilon\varphi_0).$$
So the properness of $K_{\om_0}$ on $\cH(\om_0)$ is equivalent to the properness of $K_{f^*\om_N}$ on $\cH(f^*\om_N):=\{\psi\in C^\infty(M;\R)\ |\ f^*\om_N+\ddbar\psi>0\}$.

The key observation of our work is the following lemma, generalizing Lemma 2.7 of \cite{AGP} for the Ding-functional and Proposition 6.2 of \cite{LS} for the K-energy.
\begin{lemma}\label{lem:K for Galois}
Let $f:M\to N$ be as above. If $\varphi\in \cH(M,f^*\om_N)$ is $G$-invariant, then we have
\begin{equation}\label{eqn:K for Galois}
    K^M_{f^*\om_N}(\varphi)=d\cdot K^N_{B_f,\om_N}(\phi),
\end{equation}
where $\phi$ is the function on $N$ such that $\varphi=f^*\phi$.
\end{lemma}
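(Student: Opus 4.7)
The plan is to compute $K^M_{f^*\om_N}(\varphi)$ through its defining limit
\[ K^M_{f^*\om_N}(\varphi) = \lim_{\epsilon \to 0^+} K^M_{\om_\epsilon}(\varphi - \epsilon \varphi_0), \]
where $\om_\epsilon := f^*\om_N + \epsilon \ddbar\varphi_0$ is smooth K\"ahler on $M$ for $\epsilon > 0$ small, and the auxiliary $\varphi_0$ is chosen $G$-invariant by averaging over $G$, so $\varphi_0 = f^*\phi_0$ by Lemma \ref{lem:potential}. Applying Chen--Tian's decomposition to the smooth reference $\om_\epsilon$ and letting $\epsilon \to 0^+$, the task reduces to computing the limits of the entropy $\int_M \log(\om_\varphi^n/\om_\epsilon^n)\om_\varphi^n/n!$ and of $\hat{J}^M_{-\Ric(\om_\epsilon)}(\varphi - \epsilon\varphi_0)$.

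The crucial input for both limits is a Jacobian identity. Fixing $\Omega_N = \om_N^n/n!$ and a smooth positive volume form $\Omega_M$ on $M$, the Ghigi--Koll\'ar local form $f(z) = (z_1, \ldots, z_{n-1}, z_n^{d_i})$ near each component of $R_f$ yields
\[ \log\frac{(f^*\om_N)^n}{\Omega_M} = F + \sum_i (1-1/d_i)\log\|f^*s_i\|_{f^*h_i}^2 \]
for some smooth function $F$ on $M$ (absorbing the constant $\log n!$ into $F$). Combined with the tautology $(f^*\om_\phi)^n/(f^*\om_N)^n = f^*(\om_\phi^n/\om_N^n)$ on $M \setminus R_f$ and the covering identity $\int_M f^*g \cdot \om_\varphi^n = d\int_N g \cdot \om_\phi^n$, a dominated convergence argument (enabled by the pointwise bound $\om_\epsilon^n \geq (1-\epsilon)^n(f^*\om_N)^n$, yielding a uniform integrable lower bound on $\log(\om_\epsilon^n/\Omega_M)$) shows that all $F$- and $\log\|f^*s_i\|^2$-contributions cancel pairwise between the two pieces of the entropy, leaving
\[ \lim_{\epsilon \to 0^+} \int_M \log(\om_\varphi^n/\om_\epsilon^n)\frac{\om_\varphi^n}{n!} = d \int_N \log(\om_\phi^n/\om_N^n)\frac{\om_\phi^n}{n!}. \]

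For the $\hat{J}$-piece, I would decompose $-\Ric(\om_\epsilon) = \Theta_{K_M} + \ddbar\log(\om_\epsilon^n/\Omega_M)$ where $\Theta_{K_M} := \ddbar\log\Omega_M$ is a smooth form representing $c_1(K_M)$. Using the standard formula $\hat{J}^M_{\ddbar h}(\eta) = \int_M h(\om_\eta^n - \om^n)/n!$ together with the dominated convergence argument above, the limit becomes
\[ \hat{J}^M_{\Theta_{K_M} + \ddbar F}(\varphi) + \sum_i (1-1/d_i) \int_M \log\|f^*s_i\|^2 \, \frac{\om_\varphi^n - (f^*\om_N)^n}{n!}. \]
Taking $\ddbar$ of the Jacobian identity produces the key smooth adjunction
\[ \Theta_{K_M} + \ddbar F = f^*\Bigl(\Theta_{K_N} + \sum_i(1-1/d_i)\Theta_{h_i}\Bigr) \]
on all of $M$, where $\Theta_{K_N} = -\Ric(\om_N)$ since $\Omega_N = \om_N^n/n!$. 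The pullback identity $\hat{J}^M_{f^*\chi}(f^*\phi) = d \cdot \hat{J}^N_\chi(\phi)$ for smooth $\chi$ on $N$ (immediate from $\om_\varphi = f^*\om_\phi$ on $M\setminus R_f$ and $\int_M f^*\alpha = d\int_N \alpha$ for top forms) combined with the pullback formula for the log-section-norm integrals converts the above into
\[ d \cdot \hat{J}^N_{-\Ric(\om_N) + \sum_i(1-1/d_i)\Theta_{h_i}}(\phi) + d \sum_i(1-1/d_i) \int_N \log\|s_i\|^2 \, \frac{\om_\phi^n - \om_N^n}{n!}, \]
which, added to the entropy contribution, is exactly $d \cdot K^N_{B_f, \om_N}(\phi)$.

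The main technical obstacle is the justification of the two $\epsilon \to 0^+$ limits, which reduces to producing integrable dominants for $\log(\om_\epsilon^n/\Omega_M)$ uniform in $\epsilon$ (via the Jacobian lower bound together with the local integrability of $\log\|f^*s_i\|^2$ against smooth measures). The conceptual heart of the lemma is the smooth adjunction $\Theta_{K_M} + \ddbar F = f^*(\Theta_{K_N} + \sum_i(1-1/d_i)\Theta_{h_i})$, which is the differential-geometric incarnation of the algebraic identities $K_M = f^*K_N + R_f$ and $R_f = f^*B_f$.
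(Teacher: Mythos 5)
Your proposal is correct, and it reaches the identity by a genuinely different route from the paper's. The paper works directly with the degenerate reference form $f^*\om_N$ in the Mabuchi integral $-\int_0^1 dt\int_M\varphi\,(s(\om_{t\varphi})-\underline{s}^M)\,\om_{t\varphi}^n/n!$: it expands $\ddbar\log (f^*\om_N)^n$ by Lelong--Poincar\'e into $-f^*\Ric(\om_N)+2\pi\sum_j r_j[R_j]$, pushes the resulting integrals over the components $R_j$ down to integrals over the $B_i$ via the counting identity $d=k_i d_i|\Gamma_i|$, and then uses Lelong--Poincar\'e on $N$ in reverse to produce the $\log\|s_i\|_{h_i}^2$ terms. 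You instead regularize the reference form, apply the Chen--Tian decomposition for each smooth $\om_\epsilon$, and absorb all the ramification data into the single Jacobian identity, whose $\ddbar$ is the smooth adjunction $\Theta_{K_M}+\ddbar F=f^*\bigl(\Theta_{K_N}+\sum_i(1-1/d_i)\Theta_{h_i}\bigr)$; no current supported on $R_f$ ever appears, and the only combinatorial input is the $\mathbb{Q}$-divisor identity $R_f=f^*B_f$ in place of the paper's degree bookkeeping. Your version makes the $\epsilon\to 0$ limit --- which the paper also needs for its very definition of $K_{f^*\om_N}$ but dispatches in one line --- the explicit technical core, and the dominated-convergence justification you sketch is adequate. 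One point you should make explicit: the Ghigi--Koll\'ar normal form holds only on the regular part of $R_f$, and since $B_f$ is not assumed simple normal crossing, the smoothness of $F$ across the singular locus of $R_f$ should be deduced from the global divisor identity $\mathrm{div}(\mathrm{Jac}\,f)=R_f=f^*B_f$ (so that the quotient of $|\mathrm{Jac}\,f|^2$ by $\prod_i\|f^*s_i\|^{2(1-1/d_i)}$ is a positive power of a nonvanishing smooth function) rather than from the normal form alone.
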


\begin{proof}
Let us write $\omega_{t\varphi}:=f^*\omega_N+t\ddbar\varphi$ and $\omega_{t\phi}:=\omega_N+t\ddbar\phi$, then we have
\begin{eqnarray*}
K^M_{f^*\omega_N}(\varphi)&=&-\int_0^1 dt\int_M\varphi \big(s(\omega_{t\varphi})-\underline{s}^M\big) \frac{\omega_{t\varphi}^n}{n!}\\
&=& \int_0^1 dt\int_M\varphi\Big[\ddbar\log\frac{\omega_{t\varphi}^n}{f^*\omega_N^n}+\ddbar\log f^*\omega_N^n\Big]\wedge\frac{\omega_{t\varphi}^{n-1}}{(n-1)!}+\underline{s}^M\int_0^1 dt\int_M \varphi\frac{\omega_{t\varphi}^n}{n!}\\
&=& \int_M \log\frac{\omega_{\varphi}^n}{f^*\omega_N^n}\frac{\omega_{\varphi}^n}{n!}+\int_0^1 dt\int_M \varphi\Big[\frac{\underline{s}^M}{n}\omega_{t\varphi}-f^*Ric(\omega_N)\Big]\wedge\frac{\omega_{t\varphi}^{n-1}}{(n-1)!}\\
& &+\sum_j 2\pi r_j\int_0^1 dt\int_{R_j}\varphi\frac{\omega_{t\varphi}^{n-1}}{(n-1)!}\\
&=& d\Big(\int_N \log\frac{\omega_{\phi}^n}{\omega_N^n}\frac{\omega_{\phi}^n}{n!}+\int_0^1 dt\int_N \phi\Big[\frac{\underline{s}^M}{n}\omega_{t\phi}-Ric(\omega_N)\Big]\wedge\frac{\omega_{t\phi}^{n-1}}{(n-1)!}\Big)\\
& &+\sum_i\sum_{j: f(R_j)=B_i} 2\pi r_j \int_0^1 dt\int_{R_j}\varphi\frac{\omega_{t\varphi}^{n-1}}{(n-1)!}.
\end{eqnarray*}
On one hand,
\begin{eqnarray*}
\underline{s}^M &=&\frac{2\pi n c_1(M)\cdot[f^*\omega_N]^{n-1}}{[f^*\omega_N]^n}=\frac{2n\pi f^*(c_1(N)-[B_f])\cdot [f^*\omega_N]^{n-1}}{[f^*\omega_N]^n}\\
&=&\frac{2n\pi (c_1(N)-[B_f])\cdot [\omega_N]^{n-1}}{[\omega_N]^n}=\underline{s}^N-\sum_i\Big(1-\frac{1}{|\Gamma_i|}\Big)\frac{2n\pi [B_i]\cdot[\omega_N]^{n-1}}{[\omega_N]^n}\\
&=&\underline{s}^N-\sum_i\Big(1-\frac{1}{|\Gamma_i|}\Big)\frac{n [\Theta_{h_i}]\cdot[\omega_N]^{n-1}}{[\omega_N]^n}.
\end{eqnarray*}

On the other hand, for any $R_j$ such that $f(R_j)=B_i$, suppose $f|_{R_j}: R_j\to B_i$ has degree $d_j\geq 1$. Since $f$ is Galois, both $d_j$ and $r_j=|\Gamma(R_j)|-1$  depends only on $B_i$, not on $R_j$, so we can just write $d_i$ and $r_i=|\Gamma_i|-1$. Then we have
\begin{eqnarray*}
\sum_i\sum_{j: f(R_j)=B_i} 2\pi r_j \int_0^1 dt\int_{R_j}\varphi\frac{\omega_{t\varphi}^{n-1}}{(n-1)!}=\sum_i 2\pi(|\Gamma_i|-1)d_ik_i\int_0^1 dt\int_{B_i}\phi\frac{\omega_{t\phi}^{n-1}}{(n-1)!},
\end{eqnarray*}
where $k_i$ is the number of different $R_j$'s mapped onto $B_i$.

Now a generic point $q\in N$ should have $d$ pre-images. Choose a point near $B_i$, we will have $d_i(r_i+1)=d_i|\Gamma_i|$ pre-images near $R_j$, so we get $d=k_id_i|\Gamma_i|$. So we get 
\begin{eqnarray*}
\sum_i\sum_{j: f(R_j)=B_i} 2\pi r_j \int_0^1 dt\int_{R_j}\varphi\frac{\omega_{t\varphi}^{n-1}}{(n-1)!}&=&\sum_i 2\pi(|\Gamma_i|-1)\frac{d}{|\Gamma_i|}\int_0^1 dt\int_{B_i}\phi\frac{\omega_{t\phi}^{n-1}}{(n-1)!}\\
&=& d\sum_i\Big(1-\frac{1}{|\Gamma_i|}\Big)\int_0^1 dt\int_N \phi\Big[\ddbar\log \|s_i\|_{h_i}^2+\Theta_{h_i}\Big]\wedge\frac{\omega_{t\phi}^{n-1}}{(n-1)!}.
\end{eqnarray*}
Also note that
\begin{eqnarray*}
\int_0^1 &dt&\int_N \phi\ddbar\log \|s_i\|_{h_i}^2\wedge\frac{\omega_{t\phi}^{n-1}}{(n-1)!}= \int_0^1 dt\int_N \log \|s_i\|_{h_i}^2\frac{d}{dt}\frac{\omega_{t\phi}^{n}}{n!}\\
&=&\Big(\int_N \log \|s_i\|_{h_i}^2\frac{\omega_{t\phi}^{n}}{n!}\Big)\Big|_{t=0}^{t=1}=\int_N \log \|s_i\|_{h_i}^2\Big(\frac{\omega_{\phi}^{n}}{n!}-\frac{\omega_{N}^{n}}{n!}\Big)
\end{eqnarray*}
From these identities we at once get \eqref{eqn:K for Galois}.
\end{proof}

\section{Proof of Theorem \ref{thm:Galois}}\label{sec:Main}

The following theorem is essentially the same as Chen-Cheng \cite{CC}. Since it is not stated explicitly, we include a sketched proof here for the convenience of the readers: 
\begin{theorem}\label{thm:ChenCheng}
If K-energy is proper on the space of $G$-invariant K\"ahler potentials for a finite subgroup $G$ of $Aut(N,J)$, then there is a cscK metric.
\end{theorem}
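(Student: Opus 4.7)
The plan is to run Chen-Cheng's argument entirely inside the closed subspace of $G$-invariant potentials. Since $G\subset \mathrm{Aut}(N,J)$ is finite, after replacing any reference K\"ahler form by its $G$-average we may assume $\om$ is $G$-invariant, and we set $\cH^G(\om):=\{\varphi\in\cH(N,\om)\mid \gamma^*\varphi=\varphi\ \forall\gamma\in G\}$. This is a smooth submanifold of $\cH(N,\om)$ whose tangent space at any point consists of $G$-invariant functions.

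First I would set up a $G$-invariant continuity path, e.g.\ Chen-Cheng's twisted path
\begin{equation*}
(1-t)\Big(\log\frac{\om_\varphi^n}{\om^n}-c_t\Big)\;+\;t\,\big(s(\om_\varphi)-\underline s\big)\;=\;0,
\end{equation*}
or equivalently their decoupled system with $G$-invariant right-hand side. Because $\om$, $\mathrm{Ric}(\om)$ and $\underline s$ are $G$-invariant, any $G$-invariant potential $\varphi$ produces a fully $G$-invariant equation, so the path preserves $\cH^G$. For openness at a solution $\varphi_{t_0}\in\cH^G$, the linearization is a fourth-order elliptic operator of Lichnerowicz type which commutes with the $G$-action; hence it restricts to a Fredholm operator on $G$-invariant functions and the Implicit Function Theorem of Chen-Cheng applies verbatim in $\cH^G$ (the cokernel in their argument corresponds to holomorphic vector fields, which after $G$-averaging give $G$-invariant obstructions that vanish on the $G$-invariant nullspace).

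The closedness step is where the properness hypothesis enters. Chen-Cheng's a priori estimates proceed in three tiers: (i) an $L^\infty$ bound on $\varphi_t$ using properness of the K-energy and a Moser iteration/$\alpha$-invariant argument; (ii) an $L^\infty$ bound on the scalar curvature coming from an entropy/Alexandrov maximum principle estimate; (iii) $C^{2,\alpha}$ and higher-order bootstrapping from the complex Monge-Amp\`ere equation with bounded right-hand side. Tiers (ii) and (iii) are purely PDE arguments independent of any group action, so they carry over without change. For (i) we need that along the $G$-invariant continuity path, the K-energy of $\varphi_t$ is uniformly bounded above; this holds for Chen-Cheng's path by monotonicity, and properness \emph{on $\cH^G$} then yields a uniform bound on $(I-J)(\varphi_t)$, from which the $L^\infty$ bound on $\varphi_t$ follows exactly as in \cite{CC} since $\varphi_t$ being $G$-invariant does not affect the Moser iteration.

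The main obstacle to be careful about is step (i): Chen-Cheng's $L^\infty$ estimate is deduced from properness \emph{of the K-energy on the full space $\cH$}, whereas we only assume properness on $\cH^G$. The point is that this is exactly what is needed, because every potential that ever appears along the $G$-invariant continuity path lies in $\cH^G$, and the Moser iteration uses only the K-energy value, the K\"ahler class, and standard Sobolev inequalities on $(N,\om)$ — none of which distinguish invariant from non-invariant potentials. Combined with openness, closedness, and the existence of a $G$-invariant starting solution at $t=0$ (a multiple of $\om$ plus a $G$-invariant Monge-Amp\`ere solution, which exists by Yau's theorem applied to $G$-averaged data), we reach $t=1$ and obtain a $G$-invariant cscK metric on $N$, completing the proof.
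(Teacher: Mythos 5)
Your proposal is correct and follows essentially the same route as the paper: average the reference form over the finite group $G$, run a Chen--Cheng-type continuity path inside the $G$-invariant potentials (the paper uses Chen's path $t(s(\om_\varphi)-\underline s)=(1-t)(\mathrm{tr}_{\om_\varphi}\om_0-n)$, with openness from Chen, Hashimoto and Zeng carrying over verbatim to the invariant function spaces), and obtain closedness from Chen--Cheng's a priori estimates, noting that properness on $\cH^G$ is all that is needed since every potential along the path is $G$-invariant. The only difference is your choice of the log-Monge--Amp\`ere twisted path rather than Chen's trace path, which is immaterial to the argument.
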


\begin{proof}
We follow the same strategy as Tian did in \cite{T0}, using Chen-Cheng's continuity method. Start with a $G$-invariant K\"ahler form $\omega_0$ in the given $G$-invariant K\"ahler class. We use Chen's continuity path proposed in \cite{Chen}:
$$t(s(\omega_{\varphi})-\underline s)=(1-t)(tr_{\omega_{\varphi}}\omega_0-n).$$
In \cite{Chen}, Chen proved that the set of $t\in (0,1)$ such that the above equation is solvable is open, and the openness at $t=0$ is proved later by Hashimoto \cite{Has} and Zeng \cite{Zeng}. Their arguments work also for $G$-invariant function spaces when $G$ is finite. Then we apply Chen-Cheng's {\it a priori} estimates in \cite{CC} to get the closedness. 
\end{proof}

Given this theorem, to prove Theorem \ref{thm:Galois}, it suffices to prove the properness of $K^M_{f^*\omega_N}$ on the space of $G$-invariant potentials $\cH^G(M,f^*\omega_N)$. In turn, by Lemma \ref{lem:K for Galois} and \ref{lem:potential}, it suffices to prove the properness of $K^N_{B_f,\omega_N}$ on the space of $\cE^1$ potentials $\cE^1(N,\omega_N)$ with $B_f=\sum_i(1-\beta_i)B_i:=\sum_i(1-\frac{1}{|\Gamma_i|})B_i$. Finally by Lemma \ref{lem:approx_E1}, it suffices to prove the properness of log-K-energy on $\cH(N,\om_N)$.
The idea, as in \cite{LSY} is again to use the log-$\alpha$-invariant:
$$\alpha(\om_N, D):=\sup\{\alpha>0\big| \int_N e^{-\alpha(\varphi-\sup\varphi)}\frac{\om_N^n}{\Pi_i|s_i|_{h_i}^{2(1-\beta_i)}n!}\leq C_\alpha\quad \forall \varphi\in PSH(N,\om_N)\}.$$
Also one can prove that $\alpha(\om_N, D)>0$.

We have the following lemma, which is first observed by K. Zheng (Proposition 4.2 of \cite{Zh}) when $D$ is simple normal crossing. The proof here is essentially the same. For the convenience of readers, we include a sketched proof here.
\begin{lemma}\label{lem:general log LSY}
Let $D=\sum_i(1-\beta_i)D_i$. Assume we can find a constant $\epsilon\geq 0$ such that
\begin{enumerate}
    \item $\alpha(\om_N, D)>\frac{n\epsilon}{n+1}$;
    \item $2\pi (K_N+D)+\epsilon[\om_N]>0$;
    \item $$\Big(\frac{2n\pi(K_N+D)\cdot [\om_N]^{n-1}}{[\om_N]^n}+\epsilon\Big)[\om_N]>2\pi(n-1)\big{(}K_N+D\big{)},$$
\end{enumerate}
then $K^N_{D,\omega_N}$ is proper.
\end{lemma}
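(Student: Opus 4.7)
My plan is to adapt the Li--Shi--Yao $\alpha$-invariant properness criterion, in the form generalised by K.~Zheng \cite{Zh} to the conic setting, to our (possibly non SNC) log pair $(N,D)$. By Lemma~\ref{lem:approx_E1} it suffices to establish an estimate
$$K^N_{D,\om_N}(\varphi) \;\geq\; \delta(I-J)(\varphi) - C,\qquad \delta,\,C>0,$$
on the space $\mathcal{H}(N,\om_N)$ of smooth K\"ahler potentials normalised by $\sup_N\varphi = 0$. The first step is to invoke Chen--Tian's formula and split the log-K-energy as
$$K^N_{D,\om_N}(\varphi) \;=\; H_{D,\om_N}(\om_\varphi) \;+\; \hat J_\chi(\varphi) \;+\; C_0,$$
where $H_{D,\om_N}(\om_\varphi) := \int_N \log\bigl(\om_\varphi^n\prod_i\|s_i\|_{h_i}^{2(1-\beta_i)}/\om_N^n\bigr)\om_\varphi^n/n!$ is the twisted entropy and $\chi := -\Ric(\om_N) + \sum_i(1-\beta_i)\Theta_{h_i}$ is a smooth closed $(1,1)$-form representing $2\pi c_1(K_N+D)$.

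For the entropy I would pick $\alpha$ in the nonempty open interval $\bigl(\tfrac{n\epsilon}{n+1},\,\alpha(\om_N,D)\bigr)$ supplied by hypothesis~(1), apply Jensen's inequality to the probability measure $\om_\varphi^n/(Vn!)$ with integrand $\log\bigl(e^{-\alpha\varphi}\om_N^n/(\om_\varphi^n\prod_i\|s_i\|_{h_i}^{2(1-\beta_i)})\bigr)$, and control the resulting integral by the log-$\alpha$-invariant definition (this is where $\sup\varphi=0$ is used). This yields
$$H_{D,\om_N}(\om_\varphi) \;\geq\; -\alpha\int_N\varphi\,\om_\varphi^n/n!\;-\;C_1\;\geq\;\alpha V\cdot\tfrac{n+1}{n}(I-J)(\varphi) - C_1,$$
where the second inequality combines $-\int\varphi\,\om_\varphi^n/(Vn!)\geq I(\varphi)$ (valid since $\sup\varphi=0$) with the Aubin estimate $I\geq\tfrac{n+1}{n}(I-J)$.

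For the twisted piece I would decompose $\chi = \chi' - \epsilon\om_N$, with $\chi'$ a smooth K\"ahler representative of $[\chi]+\epsilon[\om_N]$ (available by hypothesis~(2)). A short cohomological computation shows that hypothesis~(3) is equivalent to $[\bar{\chi}'\om_N - (n-1)\chi']$ being K\"ahler, where $\bar{\chi}' := n[\chi']\cdot[\om_N]^{n-1}/[\om_N]^n$; this is precisely the Song--Weinkove solvability criterion for the $J$-equation, and in particular it gives the uniform lower bound $\hat J_{\chi'}(\varphi)\geq -C_2$. Combining with the linearity $\hat J_\chi = \hat J_{\chi'} - \epsilon\hat J_{\om_N}$ and the standard comparison $\hat J_{\om_N}(\varphi) \leq V(I-J)(\varphi) + C_3$ produces $\hat J_\chi(\varphi) \geq -\epsilon V(I-J)(\varphi) - C_4$, and adding the two estimates yields
$$K^N_{D,\om_N}(\varphi) \;\geq\; V\Bigl(\alpha\tfrac{n+1}{n} - \epsilon\Bigr)(I-J)(\varphi) - C,$$
whose coefficient is strictly positive precisely because hypothesis~(1) forces $\alpha > \tfrac{n\epsilon}{n+1}$.

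The main technical hurdle I anticipate is the clean invocation of the lower bound $\hat J_{\chi'}\geq -C_2$ together with the bookkeeping for the $\hat J_{\om_N}$-versus-$(I-J)$ comparison; various authors normalise $\hat J_\chi$ differently, and it is the careful reconciliation of these normalisations that produces exactly the factor $n/(n+1)$ appearing in hypothesis~(1). Once those constants are pinned down the remainder of the argument is essentially the same as in Zheng's SNC version \cite{Zh}, with the non-SNC shape of $D$ entering only through the (still available) positivity of the log-$\alpha$-invariant.
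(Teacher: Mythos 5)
Your proposal is correct and follows essentially the same route as the paper's proof: choose $\alpha\in(\tfrac{n\epsilon}{n+1},\alpha(\om_N,D))$, bound the twisted entropy from below by $\tfrac{(n+1)\alpha}{n}(I-J)$ via the log-$\alpha$-invariant, Jensen's inequality and Aubin's inequality $I\geq\tfrac{n+1}{n}(I-J)$, and control the twisted $\hat J$-term using conditions (2) and (3) through the Song--Weinkove solvability criterion for the $J$-equation. The only cosmetic difference is that the paper absorbs the $\epsilon$ directly into $\chi_\epsilon=\chi+\epsilon\om_N$ and subtracts $\epsilon(I-J)$ from the entropy coefficient, rather than splitting $\hat J_\chi=\hat J_{\chi'}-\epsilon\hat J_{\om_N}$ as you do; the bookkeeping is equivalent.
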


\begin{proof}

Choose $\frac{n\epsilon}{n+1}<\alpha<\alpha(\om_N, D)$. Then by definition, we have
\begin{align*}
    C_\alpha &\geq \int_N e^{-\alpha(\varphi-\sup\varphi)}\frac{\om_N^n}{\Pi_i|s_i|_{h_i}^{2(1-\beta_i)}n!}\\
    &=\int_N e^{-\alpha(\varphi-\sup\varphi)-\sum_i(1-\beta_i)\log|s_i|_{h_i}^2-\log\frac{\om_\varphi^n}{\om_N^n}}\frac{\om_\varphi^n}{n!}.
\end{align*}
By Jensen inequality, we get
\begin{align*}
 & \int_N \log\frac{\om_\varphi^n}{\om_N^n}\frac{\om_\varphi^n}{n!}+\sum_i(1-\beta_i)\int_N \log|s_i|_{h_i}^2 \frac{\om_\varphi^n}{n!} \\
 & \geq \alpha\int_N (\sup\varphi-\varphi)\frac{\om_\varphi^n}{n!}-C\\
 &\geq \alpha\int_N \varphi \Big(\frac{\om_N^n}{n!}-\frac{\om_\varphi^n}{n!}\Big)-C=\alpha I_{\om_N}(\varphi)-C\\
 &\geq \frac{(n+1)\alpha}{n}(I-J)(\varphi)-C.
\end{align*}
Using the above inequality, we have
$$K^N_{D,\om_N}(\phi)\geq \Big( \frac{(n+1)\alpha}{n}-\epsilon\Big)(I-J)(\varphi)+\hat J_{-Ric(\om_N)+\sum_i(1-\beta_i)\Theta_{h_i}+\epsilon\om_N, \om_N}(\varphi)-C.$$
Write $\chi_\epsilon:=-Ric(\om_N)+\sum_i(1-\beta_i)\Theta_{h_i}+\epsilon\om_N$, then condition (2) implies $[\chi_\epsilon]>0$. Now condition (3) implies the solvability of the corresponding $J$-equation, so $\hat J_{\chi_\epsilon, \om_N}$ is bounded from below.
\end{proof}

\begin{remark}\label{rk:Zak}
In the above proof, we can write instead
$$K^N_{D,\om_N}(\phi)\geq \Big( \frac{(n+1)\alpha}{n}\Big)(I-J)(\varphi)+\hat J_{-Ric(\om_N)+\sum_i(1-\beta_i)\Theta_{h_i}, \om_N}(\varphi)-C.$$
Set $\theta:=-Ric(\om_N)+\sum_i(1-\beta_i)\Theta_{h_i}$, then $[\theta]=2\pi(K_N+D)$. Using the work of Sj\"ostr\"om Dyrefelt \cite{SD},  we have
$$\sup\{\delta\in\R |\ \hat J_{\theta,\om_N}\geq \delta(I-J)_{\om_N}-C_\delta\}\geq \min\Big\{\mathcal{T}([\theta],[\om_N]), \inf_V\frac{C_{\theta,\om_N}\int_V \om_N^p-p\int_V \theta\wedge\om_N^{p-1}}{(n-p)\int_V\om_N^p}\Big\},$$
where $\mathcal{T}([\theta],[\om_N]):=\sup\{\delta\in\R|\ [\theta]-\delta[\om_N]\geq 0\}$, $C_{\theta,\om_N}:=\frac{n\int_N \theta\wedge \om_N^{n-1}}{\int_N \om_N^n}$ and the inf is taken with respect all subvarieties of $N$ of dimension $p\leq n-1$. 

Then Lemma \ref{lem:general log LSY} can be strengthened to:
{\em If
$$\frac{n+1}{n}\alpha(\om_N, D)+\min\Big\{\mathcal{T}\big([\theta],[\om_N]\big),\  \inf_V\frac{C_{\theta,\om_N}\int_V \om_N^p-p\int_V \theta\wedge\om_N^{p-1}}{(n-p)\int_V\om_N^p}\Big\}>0,$$
where $[\theta]=2\pi(K_N+D)$, then $K^N_{D,\omega_N}$ is proper.} In view of K. Zhang's work on the analytic $\delta$-invariant, we can even replace the log-$\alpha$ invariant by certain $\delta$-invariant \cite{Zh}.
\end{remark}

Now we can finish the proof of Theorem \ref{thm:Galois}:
\begin{proof}[Proof of Theorem \ref{thm:Galois}:]

By Lemmma \ref{lem:general log LSY}, the assumptions in Theorem \ref{thm:Galois} imply that $K_{B_{\delta_0},\om_N}$ is proper on the space of smooth K\"ahler potentials. By the linearity of $K_{B_\delta,\om_N}$ on $\delta$, we get, for $0<\delta\leq \delta_0$,
$$K_{B_\delta,\om_N}=\frac{\delta}{\delta_0}K_{B_{\delta_0},\om_N}+(1-\frac{\delta}{\delta_0})K_N.$$
Since $K_N$ is bounded from below, $K_{B_{\delta},\om_N}$ is proper on the space of smooth K\"ahler potentials for all $\delta\in (0,\delta_0]$. Take $\delta=1$, we get the properness of $K_{B_f,\om_N}$ on the space of smooth K\"ahler potentials. 
This implies the properness of $K_M$ on the space of smooth $G$-invariant K\"ahler potentials and hence the existence of cscK metrics in $[f^*\om_N]$.
\end{proof}

\section{Proof of Theorem \ref{thm:cyclic}}\label{sec:ThmCyclic}

Now let $f:M\to N$ be a cyclic Galois covering of order $d$ branched over a smooth hypersurface $B\subset N$, where both $M$ and $N$ are project algebraic manifolds of dimension $n$. Moreover, we assume that $N$ has a cscK metric with $[\om_N]= \lambda 2\pi c_1(B)$ for some $\lambda>0$. We can make use of the following estimate of Berman \cite{Ber} for the log-$\alpha$-invariant:
\begin{lemma}[\cite{Ber} Proposition 6.2]
Let $N$ be a projective algebraic manifold with $B\subset N$ a smooth hypersurface. Let $0<\beta<1$, then we have\begin{equation*}
    \alpha(2\pi c_1(B), (1-\beta)B)\geq \min\big\{\beta, \alpha(2\pi c_1(B)), \alpha(2\pi c_1(B)|_B)\big\}.
\end{equation*}
\end{lemma}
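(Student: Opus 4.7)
The plan is to bound the integrand $e^{-\alpha\vp}|s|_h^{-2(1-\beta)}\om^n/n!$ by splitting the domain into a tubular neighborhood of $B$ and its complement, and in the neighborhood, performing a slicing argument in the normal direction to reduce to the $\alpha$-invariant on $B$ itself. Fix a smooth Hermitian metric $h$ on $\cO(B)$ with $\Theta_h=\om$ and let $s$ be the defining section of $B$. Choose $\alpha<\min\{\beta,\alpha(2\pi c_1(B)),\alpha(2\pi c_1(B)|_B)\}$ and, for $\vp\in\PSH(N,\om)$ with $\sup\vp=0$, set
\[
I(\vp) := \int_N e^{-\alpha\vp}\,|s|_h^{-2(1-\beta)}\,\frac{\om^n}{n!}.
\]
The goal is a uniform bound on $I(\vp)$.

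I would choose a small tubular neighborhood $U$ of $B$ and a cutoff function supported in $U$. Outside $U$, the weight $|s|_h^{-2(1-\beta)}$ is uniformly bounded, so the exterior contribution is controlled by $\alpha<\alpha(2\pi c_1(B))$ applied directly to $\vp$ on $N$. For the contribution over $U$, I would cover $B$ by finitely many coordinate charts $V_j$ over which the disk bundle $U$ trivializes as $V_j\times\Delta$, with fiber coordinate $\zeta$ making $s$ correspond to $\zeta$ up to a nowhere-vanishing smooth factor, so that $|s|_h^2$ is comparable to $|\zeta|^2$. By a partition of unity it then suffices to bound each local piece, which by Fubini is comparable to
\[
\int_{V_j}\om_B^{n-1}(b)\int_{|\zeta|<\varepsilon} e^{-\alpha\vp(b,\zeta)}\,|\zeta|^{-2(1-\beta)}\,i\,d\zeta\wedge d\bar\zeta.
\]

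Since $\vp$ is $\om$-plurisubharmonic, $\zeta\mapsto\vp(b,\zeta)$ is subharmonic up to a smooth correction. The one-dimensional log-$\alpha$ estimate, valid because $\alpha<\beta$ (so $|\zeta|^{-2(1-\beta)}$ is integrable against the exponential of a subharmonic function with controlled Lelong number), gives the inner integral a bound of the form $Ce^{-\alpha\tilde\vp(b)}$, where $\tilde\vp(b)$ is a circular average of $\vp(b,\cdot)$ at a fixed radius. The sub-mean-value property of plurisubharmonic functions then shows that $\tilde\vp$ is $\om_B$-plurisubharmonic up to a bounded additive constant, so the final integration $\int_B e^{-\alpha\tilde\vp}\om_B^{n-1}$ is controlled by $\alpha<\alpha(2\pi c_1(B)|_B)$. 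Combining the two contributions yields the desired uniform bound, showing $\alpha(2\pi c_1(B),(1-\beta)B)\geq \alpha$.

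The main obstacle I expect is to verify rigorously that the circular average $\tilde\vp$ genuinely belongs to $\PSH(B,\om_B)$ up to a bounded error. This is subtle when the normal bundle $\cO(B)|_B$ is nontrivial, since the disk bundle structure is only local: one must patch the local disk-averages into a single globally $\om_B$-plurisubharmonic function, using the curvature of the disk-bundle structure to account for the twisting of the normal bundle. A secondary, more routine technicality is the possibly unbounded negative values of $\vp$ along $B$; this is resolved by working first with the canonical truncations $\max\{\vp,-k\}$, for which all estimates are manifest, and then passing to the limit $k\to\infty$ via monotone convergence.
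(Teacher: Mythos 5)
The paper offers no proof of this lemma---it is quoted directly from Berman \cite{Ber}, Proposition 6.2---so the only thing to assess is your argument on its own terms. Your overall architecture (split off a tubular neighborhood of $B$, slice it into normal disks, prove a one-variable estimate on each fiber, then integrate over $B$ using $\alpha(2\pi c_1(B)|_B)$) is the right shape and is close in spirit to the known proofs. However, the key fiberwise estimate is false as stated. You bound the inner integral $\int_{|\zeta|<\varepsilon}e^{-\alpha\vp(b,\zeta)}|\zeta|^{-2(1-\beta)}\,i\,d\zeta\wedge d\bar\zeta$ by $Ce^{-\alpha\tilde\vp(b)}$ with $\tilde\vp(b)$ a circular average at a \emph{fixed positive radius} and $C$ uniform, justified by ``$\alpha<\beta$, so the Lelong number is controlled.'' But $\alpha<\beta$ gives no control whatsoever on the Lelong number $\nu$ of $\zeta\mapsto\vp(b,\zeta)$ at $\zeta=0$: that number is constrained only by the cohomology class $[\om_N]$ and can exceed $2\beta/\alpha$. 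Concretely, take $N=\pr^n$, $B$ a smooth quintic, $[\om_N]=2\pi c_1(\cO(5))$, and $\vp=\tfrac52\log\bigl(\sum_{i=1}^n|l_i(Z)|^2/|Z|^2\bigr)$ with the $l_i$ linear forms vanishing at a point $p\in B$; then $\vp$ is $\om_N$-psh with $\vp(p,\zeta)=5\log|\zeta|+O(1)$ on the normal disk. Choosing $\beta$ small and then $\alpha\in\bigl(2\beta/5,\min\{\beta,\alpha(2\pi c_1(B)),\alpha(2\pi c_1(B)|_B)\}\bigr)$ (nonempty for $\beta$ small), the fiber integral at $b=p$ is $\int|\zeta|^{-5\alpha-2(1-\beta)}=+\infty$ because $5\alpha>2\beta$, while $\tilde\vp(p)=5\log\varepsilon+O(1)$ is finite; in fact the ratio of the two sides blows up like $|b-p|^{2\beta-5\alpha}$ on a whole punctured neighborhood. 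The total integral $I(\vp)$ is nevertheless finite here, so the lemma is not contradicted---what fails is your pointwise domination, and with it the Fubini step.

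The repair, which is what Berman actually does, is to bound the fiber integral by $C\bigl(e^{-\alpha\vp(b,0)}+\int_{|\zeta|<\varepsilon}e^{-\alpha\vp(b,\zeta)}\bigr)$, i.e.\ to use the value of $\vp$ at the center of the disk rather than a circular average. The first term is $+\infty$ precisely on the bad fibers, but its integral over $B$ is $\int_Be^{-\alpha\vp|_B}$, which is finite because $\alpha<\alpha(2\pi c_1(B)|_B)$ and $\vp|_B\in\PSH(B,\om_N|_B)$; note this also dissolves the ``main obstacle'' you worry about, since $\vp|_B$ is globally defined and no patching of local disk averages is needed. The second term integrates to $C\int_Ne^{-\alpha\vp}$, controlled by $\alpha<\alpha(2\pi c_1(B))$, and $\alpha<\beta$ enters only through the one-variable lemma. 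Two further points then need attention: the one-variable inequality with $\vp(b,0)$ is itself not just Jensen plus the sub-mean-value property and requires a genuine argument (a dyadic decomposition of the punctured disk, or an Ohsawa--Takegoshi-type input); and the case $\vp|_B\equiv-\infty$ must be treated separately by writing $\vp=c\log\|s\|_h^2+\vp'$ with $c$ the generic Lelong number along $B$, using that $[\om_N]=2\pi c_1(B)$ forces $c\le 1$ and hence $\alpha c<\beta$. As written, your argument does not close.
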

From this we get
\begin{equation}
    \alpha([\om_N],(1-\beta)B)\geq \min\big\{ \frac{\beta}{\lambda}, \alpha([\om_N]), \alpha([\om_N]|_B)\big\}.
\end{equation}
To show the properness of $K_{(1-\beta)B,\om_N}$, we apply Lemma \ref{lem:general log LSY}: we need to find a number $\epsilon\geq 0$ such that
\begin{enumerate}
    \item $\alpha(\om_N, (1-\beta)B)>\frac{n\epsilon}{n+1}$;
    \item $K_N+(1-\beta+\lambda\epsilon)B>0$;
    \item $$\Big(\frac{nK_N\cdot B^{n-1}}{B^n}+1-\beta+\lambda\epsilon\Big)B-(n-1)K_N>0.$$
\end{enumerate}
When $\beta>0$ is sufficiently small, for condition (1), we only need to choose $\epsilon$ such that 
$$\frac{n\epsilon}{n+1}<\frac{\beta}{\lambda}.$$
So in case both $K_N+B$ and
$$\Big(\frac{nK_N\cdot B^{n-1}}{B^n}+1\Big)B-(n-1)K_N$$
are nef, we can take $\epsilon$ slightly bigger than $\frac{\beta}{\lambda}$ such that both (2) and (3) are true. This proves that $K_{(1-\beta)B,\om_N}$ is proper for sufficiently small $\beta>0$. On the other hand, when $\beta=1$, $K_{\om_N}$ is bounded from below. The remaining arguments are identical to the proof of Theorem \ref{thm:Galois}.

\section{Applications and Examples}\label{sec:example}

\subsection{Existence of covers}
In order to discuss the range of applicability of our results, the first point to address is certainly the existence problem for Galois covers between smooth complex manifolds.

The simplest and most classical situation is the so called ``simple cyclic cover" (see e.g.
Lazarsfeld \cite{La} Proposition 4.1.6): let $N$ be a smooth K\"ahler manifold and $L$ a holomorphic line bundle over $N$, with total space $\mathbf{L}$. Suppose there is a holomorphic section $\sigma\in H^0(N,\cO(L^{\otimes d}))$ ($d\in\N$ and $d\geq 2$) such that $(\sigma=0)$ is a smooth prime divisor. Then one can define $$M:=\{(p,v)\in \mathbf{L}\ |\ v^{\otimes d}=\sigma(p)\}.$$
Then it is easy to check that $M$ is a smooth K\"ahler manifold and the restriction of the projection map of $\mathbf{L}$ is a cyclic cover of degree $d$, branched along $(\sigma=0)$. 

One should note concerning this type of construction that the smoothness of $(\sigma=0)$ suffices to prove smoothness of $M$,
but it is clearly in general not necessary. 
Moving to the general abelian case this relationship has been well understood. Indeed on one hand,
given a branched covering $f:M \to N$ with $M$ and $N$ smooth, and abelian Galois group $Gal(f)$, Catanese (\cite[Proposition 1.1]{Cat}) proved that both the ramification divisor $R_f$ and the branching divisor $B_f$ are simple normal crossing.

Viceversa, given a smooth base $N$ and a simple normal crossing divisor $B$ in $N$, Bloch-Gieseker-Kawamata proved the existence of a smooth abelian Galois cover with {\em{some}} Galois group $G$. (see e.g.
Lazarsfeld \cite{La} Proposition 4.1.12 and Theorem 1.1.1 in Kawamata-Matsuda-Matsuki \cite{KMM}).

A complete general theory for the existence of Galois covers with preassigned abelian Galois group $G$ has been 
developed in the seminal paper by Pardini \cite{P}.

Concerning the general abelian case, let us observe that thanks to the above mentioned result by Catanese, the inertia group $G_j$ of each component $R_j$ of $R_f$, i.e. the intersection $G_j = \bigcap_{x \in R_j} G_x$ of all the isotropy groups of points of $R_j$, is a cyclic group. Also, the isotropy group $G_x = \{g \in G \,:\, gx=x\}$ of a point $x \in M$ is the direct product of the inertia groups of the irreducible components of $R_f$ passing through $x$ .
Furthermore, note that given a component $B_i$ of $B_f$, all the components of $f^{-1}(B_i)$ have the same inertia group since $G$ is abelian. 

As a consequence, one can deduce that an abelian Galois covering $f:M \to N$ factorizes as a composition of branched cyclic coverings and an unramified abelian covering (as indeed happens for the above mentioned Kawamata's coverings), a result which seems to be of independent interest and makes the production of further cscK examples straightforward.

\begin{proposition}
Let $f:M \to N$ be an abelian Galois covering, and put $M_0=M$.
There exist cyclic subgroups $G_1,\dots,G_\ell \subset Gal(f)$ together with Galois $G_j$-coverings $f_j : M_{j-1} \to M_j$ for all $j \in \{1,\dots,\ell\}$, and an unramified $\hat G$-covering $\hat f : M_\ell \to N$, where $\hat G = Gal(f)/\prod_{j=1}^\ell G_j$, such that
\begin{equation*}
    f = \hat f \circ f_\ell \circ \dots \circ f_1.
\end{equation*}
\end{proposition}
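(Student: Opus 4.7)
The plan is to argue by induction on $|Gal(f)|$, at each step peeling off the inertia group of a single branch component as one cyclic cover.

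If $f$ is already unramified, set $\ell=0$ and $\hat f=f$. Otherwise, I would choose an irreducible component $B_1$ of the branching divisor $B_f$, let $\Gamma_1\subset G:=Gal(f)$ be its inertia subgroup (cyclic by the Catanese result recalled just above), set $G_1:=\Gamma_1$, and form the quotient $f_1:M\to M_1:=M/G_1$. Since $G$ is abelian, $G_1$ is normal in $G$, so $f$ factors as $f=f^{(1)}\circ f_1$ with $f^{(1)}:M_1\to N$ a Galois covering whose Galois group is $G/G_1$, abelian and of strictly smaller order.

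The crucial step is to check that $M_1$ is again a smooth complex manifold. At any $y\in M_1$ with preimage $x\in M$, the isotropy $G_1\cap G_x$ is controlled by the abelian local model recalled above: $G_x$ is the direct product of the inertia groups of the components of $R_f$ passing through $x$, each acting as a pseudo-reflection on its normal coordinate. Hence $G_1\cap G_x$ is either trivial (when no component of $R_f$ over $B_1$ passes through $x$) or equal to $\Gamma_1$ itself; in both cases it is generated by pseudo-reflections, so $M_1$ is smooth near $y$ by Chevalley--Shephard--Todd (or by a direct computation in Catanese's normal form). The same analysis shows that $f^{(1)}$ is unramified over $B_1$ and has the remaining $B_i$ as branch components with inertia $\Gamma_i/(\Gamma_i\cap\Gamma_1)$.

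I would then apply the inductive hypothesis to $f^{(1)}:M_1\to N$ to obtain cyclic subgroups $\bar G_2,\ldots,\bar G_\ell\subset G/G_1$, cyclic Galois covers $f_j:M_{j-1}\to M_j$ for $j\geq 2$, and an unramified covering $\hat f:M_\ell\to N$ with $f^{(1)}=\hat f\circ f_\ell\circ\cdots\circ f_2$. Lifting a generator of each $\bar G_j$ to $G$ produces a cyclic subgroup $G_j\subset G$ mapping onto $\bar G_j$ under $G\to G/G_1$, and assembling the maps yields $f=\hat f\circ f_\ell\circ\cdots\circ f_1$ with $\hat G=G/\prod_{j=1}^\ell G_j$.

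The main obstacle is the smoothness verification for $M_1$, and inductively for all the intermediate $M_j$. It is precisely this point that forces the first cyclic factor to be the inertia of a single branch component rather than an arbitrary cyclic summand of the subgroup $\sum_i\Gamma_i\subset G$ generated by all inertias: only for such a choice does the intersection with every local isotropy $G_x$ remain a (cyclic) pseudo-reflection group, so that Chevalley--Shephard--Todd applies and the induction can proceed.
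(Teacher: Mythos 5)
Your argument has a genuine gap at the smoothness verification for $M_1$, and it is not repairable by a sharper local analysis: the dichotomy ``$G_1\cap G_x$ is either trivial or equal to $\Gamma_1$, in both cases generated by pseudo-reflections'' is false. The problem is that $\Gamma_1$ can be contained in $G_x$ for a point $x$ lying on \emph{no} component of $R_f$ above $B_1$: if $x$ lies on ramification components above $B_2,\dots,B_r$, then $G_x=\Gamma_2\times\dots\times\Gamma_r$, and the cyclic group $\Gamma_1$ may sit \emph{diagonally} inside this product, in which case its generator rotates several normal coordinates at once and is not a pseudo-reflection at $x$. Concretely, take $f\colon\pr^2\to\pr^2$, $[x:y:z]\mapsto[x^2:y^2:z^2]$, with $G=\langle a\rangle\oplus\langle b\rangle\cong(\Z/2)^2$, where $a\colon[x:y:z]\mapsto[-x:y:z]$ and $b\colon[x:y:z]\mapsto[x:-y:z]$. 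The branch divisor is the union of the three coordinate lines, and the inertia group of $B_1=\{Z=0\}$ is $\Gamma_1=\langle ab\rangle$. The point $x_0=[0:0:1]$ lies over $B_2\cap B_3$ and off $B_1$, has $G_{x_0}=\langle a\rangle\times\langle b\rangle=G\supset\Gamma_1$, and $ab$ acts on $T_{x_0}\pr^2$ as $-\mathrm{id}$. Hence $M/\Gamma_1\cong\pr(1,1,2)$ has an $A_1$-singularity at the image of $x_0$, and your induction cannot start. (A separate, smaller issue: a lift to $G$ of a generator of $\bar G_j\subset G/G_1$ may generate a cyclic subgroup of strictly larger order, so the groups $G_j$ and the quotient $\hat G=G/\prod_j G_j$ you assemble at the end are not obviously those required by the statement.)

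The paper's proof takes a different route: it sets $K=\prod_{x\in M}G_x$, the subgroup generated by all isotropy groups, decomposes $K$ into cyclic direct summands $\langle g\rangle$ of prime-power order, and quotients by these summands one at a time, using the direct-summand property---rather than membership in the inertia group of a single branch component---to argue that the relevant fixed loci are divisorial. You should be aware, however, that the example above puts the same pressure on any argument of this shape: there, every nontrivial element of $(\Z/2)^2$ has an isolated fixed point on $\pr^2$, so \emph{no} choice of a first nontrivial cyclic subgroup yields a smooth intermediate quotient. The diagonal-inertia configurations are precisely what must be excluded for a factorization through smooth cyclic covers to exist, and your proposed choice of $G_1$ does not exclude them.
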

\begin{proof}
Let $K = \prod_{x \in M} G_x \subset G$ be the subgroup generated by all the isotropy groups for the action of $G$ on $M$. 
Since $K$ is a finite abelian group, there exists a subset $S \subset K$ such that $K = \bigoplus_{g \in S} \langle g \rangle$, where $ \langle g \rangle \simeq \mathbb Z_{\ord(g)}$ is the cyclic group generated by $g$, and the order $\ord(g)$ of $g$ is a power of a prime.

Pick $g \in S$, and let $\ord(g)=p^m$ for some prime $p$ and integer $m$.
Note that, in general, $Fix(g) = \{x \in M : gx=x \}$ may be empty. 
On the other hand, since $\langle g \rangle$ is a direct summand of $K$, it has a subgroup that fixes some point of $M$.
Therefore, there is a maximal $m'<m$ such that $g' = g^{p^{m'}}$ fixes some point of $M$.
As a consequence, $Fix(g')$ is a smooth, non-necessarily connected, codimension one submanifold of $M$. 
Moreover, $\langle g' \rangle$ acts freely on $M \setminus Fix(g')$.
Thus, for all $x \in Fix(g')$ there is a $\langle g' \rangle$-invariant neighborhood $U$ with local coordinates $z_1,\dots,z_n$ such that $Fix(g') \cap U$ is the locus $z_1=0$, and the action of $\langle g' \rangle$ on $U$ is generated by $z \mapsto (e^{2\pi i/ p^{m-m'}}z_1,z_2,\dots,z_n)$.
Therefore, the quotient $M/\langle g' \rangle$ is smooth and the projection from $M$ is a $\langle g' \rangle$-covering ramified along $Fix(g')$.
Note that $M/\langle g' \rangle$ is acted on freely by the quotient group $\langle g \rangle / \langle g' \rangle$.
Therefore, letting $G_1 = \langle g \rangle$, and $M_1 = M_0/G_1$ yields a cyclic covering $f_1 : M_0 \to M_1$ with Galois group $G_1$.

On the other hand, since $G$ is abelian, the action of $G$ on $M$ descends to an action of $H_1 = G/G_1$ on $M_1$. 
Clearly one has $N = M_1 / H_1$ and, denoted by $p_1 : M_1 \to N$ the projection on the quotient, one can factorize $f = p_1 \circ f_1$.
Note that $p_1$ is a Galois covering with Galois group $H_1$.
Now, the subset of $H_1$ generated by all the isotropy groups for the action of $H_1$ on $M_1$ is isomorphic to $\bigoplus_{g \in S_1} \langle g \rangle$ where $S_1$ is $S$ without the generator of $G_1$.

Applying the same argument as above to the covering $p_1$ one eventually gets a cyclic $G_2$-covering $f_2 : M_1 \to M_2$ and a $H_2$-covering $p_2 : M_2 \to N$, where $H_2 = G / (G_1 \times G_2)$, that satisfies $p_1 = p_2 \circ f_2$. 
Hence $f = p_2 \circ f_2 \circ f_1$.

Let $\ell = |S|$.
Iterating this argument until $j=\ell$, and letting $\hat f = p_\ell$ and $\hat G = H_\ell$, then yields the thesis. 

\end{proof}

When we turn to Galois non-abelian coverings, the possibility of constructing such coverings 
is much more restricted than for cyclic and abelian ones, and indeed it has been subject of intensive research
by many authors for different type of groups (see e.g. \cite{Cat}, \cite{CP}, \cite{FP}, \cite{Man}). In the last subsection of this paper we will focus on the example of the Dihedral group, where thanks to the fundamental work of Catanese-Perroni \cite{CP}, we have a fairly complete understanding. In particular we observe that such covers do exist in abundance even with very singular branch divisors.

\subsection{KE and cscK metrics: the abelian case}

Given the great flexibility in constructing cyclic Galois covers described above, it is also clear that the range of applicability of Theorem \ref{thm:cyclic} is extremely large.

Let us first observe that we can recover immediately the known results for K\"ahler-Einstein manifolds and cyclic covers
proved in Theorem 6.1 in \cite{LS} (see also Theorem 2.5 in \cite{AGP}).

Indeed, let us assume that $N$ is Fano with K\"ahler-Einstein metric $\om_N$ and assume $B\in |-\beta K_N|$
 (note that this $\beta$ is not the one in \cite{AGP}). It is then immediate to check that Theorems \ref{thm:cyclic} and \ref{thm:Galois}
apply if $\beta \geq 1$, hence providing K\"ahler-Einstein metrics on the covering space $M$. Note that $M$ will be Fano again iff $d< \frac{\beta}{\beta -1}$ (which is the condition appearing in Theorem 2.5 of \cite{AGP}).
This application extends  the range of application of Theorem 6.1 in \cite{LS} where $B$ is supposed to be  smooth and irreducible (and similar results in \cite{AGP}).

Even in this simplest situation it is worth observing how our results extend Corollary 2.7 in  Cheng-Chen \cite{CC}:
let us indeed consider as a simple example cyclic coverings of $N=\pr^n$, $\mid G\mid = d$ and 
$B\in|\lambda H|$, where $H$ is the hyperplane bundle.
In this case, Corollary \ref{coro:cyclic} applies.

Our first condition reads:
$$K_N+(1-\beta_0)B = -(n+1)H + \lambda (1-\beta_0) H \geq 0$$

As for the second one, having fixed $\omega_N = \mu \omega_{FS}$, i.e. $[\omega_N]=\mu c_1(H)$, it is immediate to check that we get again the same condition as above.

Therefore, we can conclude that
$$\lambda > n+1 $$
in order to get a cscK metric on M.
(Note that by Theorem \ref{thm:cyclic} one gets the better condition $\lambda \geq n+1$).
On the other hand, Cheng-Chen require
$$ \lambda \geq (n+1) \frac{d}{d-1} .$$

In particular for all cyclic coverings of $\pr^n$ with $d \leq n+2$ we can find an integer $\lambda$
which falls into the range of our results and not in Cheng-Chen's.

In fact, Chen-Cheng's Corollary 2.7 is simply a combination of their main theorem and previous works (e.g. \cite{LSY}) on the properness of K-energy. It is easy to check that a direct application of Chen-Cheng in our setting gives precisely the $\delta=1$ case of Theorem \ref{thm:Galois}. So our theorems are both theoretical and practical improvements of Chen-Cheng's Corollary 2.7 in our setting.

 Let us now illustrate one simple example, where the Ka\"ahler class on the base manifold is not taken 
to be necessarily KE and yet produces via our Theorems also new KE metrics:
fix as base space 
$$(N, \omega_N) = (\pr^1 \times \pr^1, \omega), $$ where $\omega \in c_1(L), \,\, L= a_1 C_1 + a_2 C_2$, $a_1, a_2 >0$,  and $C_i$ denotes the homology class of the $i$-th factor $\pr^1$.

Set $B\in|d(b_1C_1 + b_2C_2)|$ the (smooth) branching divisor. In order for this data to satisfy our setup we need to assume that $b_i >0$. It is then an elementary computation to check that conditions of Corollary \ref{coro:cyclic} are satisfied iff $db_1>2$ and $db_2 >2$.
Moreover, whenever $(a_1,a_2)$ is proportional to $(b_1,b_2)$, the conditions of Theorem \ref{thm:cyclic} reduce to $db_1 \geq 2$, $db_2 \geq 2$, thus adding the case $d=2$, $b_1=b_2=1$, $a_1=a_2$ to the ones given by Corollary \ref{coro:cyclic}.

Let us remark on this example that the scalar curvature of the cscK metric $\omega_M$ is also easily computed by 
$$ s_M = 2 \cdot  \frac{L\cdot(-K_M)}{L^2} = (2-(d-1)b_1)a_2 + (2-(d-1)b_2)a_1, $$
which is always non positive, except for the trivial case $d=1$ and the case $d=2$, $b_1=b_2=1$, $a_1=a_2$. 
Moreover, $s_M$ vanishes if and only if ($d=3$ and $b_1=b_2=1$) or $d=b_1=b_2=2$.

Note that this construction has $4$ free parameters (plus the group $G$): $b_1, b_2$ upon which the geometry of the resulting manifold $M$ depends, and $a_1, a_2$ which control the K\"ahler class of $\omega_M$, showing in practice the large flexibility of our construction in this case.
One can check that $\omega_M$ is KE if and only if $(a_1,a_2)$ and $(2-(d-1)b_1,2-(d-1)b_2)$ are linearly dependent.
Note that this implies one can get KE metrics on the covering even starting from cscK not KE on the base (e.g. $a_1=1, a_2=2, b_1=2, b_2=3, d=3$).

\subsection{KE and cscK metrics: the non-abelian case}

As a prototype example, in this Section we focus on the case of Galois coverings with structure group $D_p$, the dihedral group of order $2p$, whose existence has been largely studied in the above mentioned works.
We assume hereafter that $p \geq 3$ so that $D_p$ is not abelian.

As described by Catanese-Perroni, in the theory of $D_p$-Galois coverings  the parameter $B$ is ``a posteriori" determined by the choice of a line bundle $L$ in such a way that there exists sections 
$a\in H^0(N, \cO_N(pL))$ and $f\in H^0(N, \cO_N(2L))$, such that the following conditions are satisfied (see
\cite{CP}, Section 6 and Theorem 6.1):
\begin{enumerate}
\item the zero-locus of $a^2 -f^p$ is smooth in the open subset where $f\not= 0$;
\item the divisors $A:=\{ a =0\}$ and $F:= \{  f=0\}$ intersect each other transversely;
\item $A\cap F \not= \emptyset$;
\end{enumerate}

A beautiful result (Theorem 6.1 of \cite{CP}) by  Catanese-Perroni shows how with these data at hand one can construct a $D_p$-Galois covering $\pi_{L} \colon M_{L} 
\rightarrow N$ ramified
along the locus $\{a^2 -f^p =0\} \subset N$. It is interesting to observe that in this case the ramification divisor has $p$ components all with inertia group $\Z _2$,
while the branch locus has one component for odd and two for even $p$ respectively.

We want to apply this result in few significant cases, in order to analyze the possibility of applying our Theorem \ref{thm:Galois}.
From now on $G$ is assumed to be the dihedral group $D_p$.

\begin{enumerate}
    \item 
Let us start again with 
 $N=\pr^n$ and we consider $L=\lambda H$  which implies
$B\in|2p \lambda H|$, where $H$ is the hyperplane bundle. In this case we also assume for simplicity 
$[\omega_N] = c_1(H)$.

This implies that in this situation $B_{\delta} \sim \delta p \lambda H$, hence our first condition reads
$$K_N + B_{\delta} \sim (\delta p \lambda - n - 1)H \geq 0 \,\, ,$$
while the second 

$$\frac{n(K_N+B_{\delta})\cdot [\om_N]^{n-1}}{[\om_N]^n}[\om_N]-(n-1)(K_N+B_{\delta}) \geq 0, $$
hence both reduce to 
$$ \delta p \lambda \geq n+1 \,\, ,$$
for some $\delta \in [1, 2)$.

As for the cyclic case described above, it is easy to observe that there are plenty of situations where our result applies while Chen-Cheng's doesn't (the simplest being $n=2$, $p=3$ and $\lambda = 1$, or $n=p=3$ and $\lambda =1$). 

Yet, all these examples fall in the category of K\"ahler-Einstein manifolds, but again, contrary to Li-Sun, we do not have to assume smoothness and irreducibility of the branch locus. 

\item

In order to construct csck non KE metrics we can apply Catanese-Perroni's theory taking as base manifold $\pr^n\times \pr^n$ with $L =  H_1 + H_2$, having denoted by $H_i$ the hyperplane bundle on the $i$-th factor.
We also choose $\omega_N \in b_1 c_1(H_1) + b_2 c_1(H_2)$. 

The first condition

$$K_N + B_{\delta} \sim (\delta p  - n - 1)H_1  +  (\delta p  - n - 1)H_2 \geq 0$$

translates into $\delta p  \geq n + 1$, while the second 

$$\frac{2n(K_N+B_{\delta})\cdot [\om_N]^{2n-1}}{[\om_N]^{2n}}[\om_N]-(2n-1)(K_N+B_{\delta}) \geq 0 $$

becomes 

$$(n \frac{b_1}{b_2} - (n-1))(\delta p  - n - 1)H_1 + (n \frac{b_2}{b_1} - (n-1))(\delta p  - n - 1)H_2 \geq 0$$.

We then have to require either 
$$\delta p = n+1 
\qquad \mbox{or} \qquad
\left\{\begin{array}{l} 
\delta p  >  n+1 \\ 
n \frac{b_1}{b_2} - (n-1) \geq 0 \\ 
n \frac{b_2}{b_1} - (n-1) \geq 0
 \end{array}\right. $$
for some $\delta \in [1,2)$.
Equivalently, either $1 \leq \frac{n+1}{p} < 2$ or 

$$ \left\{\begin{array}{l}
p > \frac{n+1}{2} \\
\frac{n-1}{n}b_2 \leq b_1 \leq \frac{n}{n-1} b_2
\end{array}\right. $$
Note that the above system gives large families of new cscK manifolds for any $n$ (not KE for $b_1 \neq b_2$) and these were not covered by Chen-Cheng's Theorem for $n\geq3$.

\item

Let us end this Section by applying our results and Catanese-Perroni's construction for dihedral $D_p$-Galois covering to the case when $N= Bl_{P_1,P_2,P_3}\pr^2$ is the the projective plane blown-up at three non-collinear points $P_i$.
We denote by $\sigma \colon N \to \pr^2$ the blow-up at $\{ P_1, P_2, P_3\}$, and
$$
L= kH - \sum_{i=1}^3 a_iE_i \, , 
$$
where $H$ is the pull-back via $\sigma$ of any line not passing through the points $P_i$'s, $E_i = \sigma^{-1}(P_i)$ is the exceptional curve over $P_i$ for all $i=1,2,3$, and $k,a_1,a_2,a_2$ are integers.

Since $Bl_{P_1,P_2,P_3}\pr^2$ is toric, $L$ is very ample if and only if it is ample \cite[Theorem 6.1.15]{CoxLittleSchenck}.
On the other hand, whenever $2L$ is very ample, a simple application of Bertini's Theorem gives the existence of the seeked holomorphic sections $a\in H^0(N, \cO_N(pL))$ and $f\in H^0(N, \cO_N(2L))$ which satisfy the above requirements (the transversality property required follows at once from \cite[Lemma V,1.2]{H}).
Summarizing, in order to apply \cite{CP}, the requirement is just that $L$ is ample. 
As a strightforward application of the Nakai-Moishezon Criterion (see e.g. \cite{H}) this is equivalent to
$$a_1, a_2, a_3 >0, \qquad\mbox{and}\qquad k>a_i+a_j \quad \forall i\neq j.$$

%


%

Having now $\pi_{L} \colon M_{L} \rightarrow N$, branched along $B \in | 2pL|$, we have to choose the cscK class $[\omega_N]$. We can make (at  least) two very different, but equally interesting, choices:

\begin{itemize}
    \item 
    $[\omega_N] = -K_N$ and $\omega_N$ is the K\"ahler-Einstein metric found by Siu \cite{Siu1988} and Tian-Yau \cite{TianYau1987}.
    
    \item 
    $[\omega_N] = F$ where $F =  r H - \sum_{i=1}^3 E_i$, with $r$ positive and
    sufficiently large so that $F$ contains a cscK metric via Theorem 1.3 in  \cite{AP}.
\end{itemize}

Clearly, we can treat these cases simultaneously denoting by $[\omega_N] =r H - \sum_{i=1}^3 E_i$,
where $r = 3$ gives the Siu-Tian-Yau case, while $r \gg 3$ gives the Arezzo-Pacard one.

When is $K_N+B_{\delta}$ nef?

$$K_N+B_{\delta} = (\delta p k -3)H - \sum_{i=1}^3(\delta p a_i - 1)E_i,$$
hence the nef condition translates into $ k \geq (a_i + a_j) + \frac{1}{\delta p}$ for all $i\neq j$.
Equivalently, thanks to the fact that $k$ and $a_i$'s are integers, $p \geq 3$, and $1 \leq \delta < 2$, nef condition reduces to
\begin{equation} k \geq a_i + a_j + 1 \qquad \forall i\neq j. \end{equation}

When is $\frac{2(K_N+B_{\delta})\cdot [\om_N]}{[\om_N]^2}[\om_N]-(K_N+B_{\delta})$ nef?

Notice that
$$\frac{2(K_N+B_{\delta})\cdot [\om_N]}{[\om_N]^2}[\om_N] = 2 \frac{r(\delta p k -3) - \sum_{i=1}^3(\delta p a_i - 1)}{r^2 - 3} (rH -  \sum_{i=1}^3  E_i).$$



Given the number of parameters involved let us first fix $r=3$ hence looking at Siu-Tian-Yau's KE metric on the base.
In this case $\frac{2(K_N+B_{\delta})\cdot [\om_N]}{[\om_N]^2}[\om_N]-(K_N+B_{\delta})$ reduces to

$$ [\delta p (2 k - \sum_{j=1}^3a_j) -3]H 
- \sum_{i=1}^3  [\delta p (k - a_i - \frac{1}{3}\sum_{j=1}^3 a_j ) - 1] E_i$$
hence the nef condition becomes $ \delta p( a_i+a_j - \frac{1}{3}\sum_{\ell=1}^3 a_\ell) \geq 1$ for all $i\neq j$, or equivalently
$ a_i \leq -\frac{1}{\delta p} + \frac{2}{3}\sum_{\ell=1}^3 a_\ell$ for all $i \in \{1,2,3\}$.
Since all $a_i$'s are integers and $p \geq 3$, this condition is satisfied for some $\delta \in [1,2)$ whenever
$$ a_i \leq \frac{1}{3}\sum_{\ell=1}^3 (2a_\ell - 1), \quad \forall i \in \{1,2,3\} . $$

As a consequence, as soon $a_i$'s satisfy this condition and $k$ is chosen so that $L$ is ample, $M$ admits a cscK metric in $\pi_L^* (-K_N)$ thanks to Theorem \ref{thm:Galois}.

Going back to the adiabatic cscK classes given by $r \gg 3$, let us analyze for simplicity the case $a_i = 1$ for all $i$, yet is clearly not a necessary condition for our Theorem to apply.

In this case, $\frac{2(K_N+B_{\delta})\cdot[\om_N]}{[\om_N]^2}[\om_N]-(K_N+B_{\delta})$ becomes 

$$ \frac{\delta p (r^2 k + 3 k - 6 r) - 3(r^2 - 2r +3)}{r^2 - 3}H 
- \sum_{i=1}^3  \frac{\delta p (2r k + 3 - r^2 - 6 )+r^2 - 6r + 3}{r^2 - 3} E_i$$
and it is nef if and only if the coefficient of $H$ is not smaller than the coefficient of $E_i$, or equivalently
$$ (k+2) r^2 - (4k+6) r + 3 k + 6 \geq \frac{5r^2 - 18r + 15}{\delta p}.$$
This inequality is satisfied for some $\delta \in [1,2)$ whenever $k\geq 4$ and $r \gg 3$, hence providing new large families of csck manifolds by Theorem \ref{thm:Galois}.

Finally notice that in all cases discussed above, under assumption of generality of the parametrers involved, cscK metric constructed on $M$ are not KE.
In order to see this, note that by Catanese-Perroni's construction one has $K_M = \pi_L^*(K_N + pL)$.
On the other hand, the cscK metric we construct over $M$ represents the class of $\pi^*F $, thus it is KE if and only if there is a real constant $\lambda$ such that $K_N+pL = \lambda F$.
In turns, this is equivalent to
$$ (pk-3-\lambda r)H - \sum_{i=1}^3 (pa_i-1-\lambda)E_i = 0, \quad \mbox{for some} \quad \lambda \in \mathbf R. $$
Therefore, in order to make sure that the cscK metric we constructed on $M$ is not KE, the parameters must satisfy:
\begin{itemize}
\item $k \neq 3a_i$ for some $i$  (Siu-Tian-Yau case, i.e. $r=3$),
\item $k \neq \frac{3+(pa_i-1)r}{p}$ for some $i$ (Arezzo-Pacard case, i.e. $r \gg 3$).
\end{itemize}
It is then clear that the cscK metric we constructed on $M$ is never KE in the Arezzo-Pacard case up to taking $r$ sufficiently large, once $k$ and the $a_i$'s are fixed.
On the other hand, that metric is KE in the Siu-Tian-Yau case if and only if $k=3a_1=3a_2=3a_3$.

We end up this subsection by noting that for all coverings of $Bl_{P_1,P_2,P_3}\pr^2$ discussed here does not apply a result of Chen-Cheng stating that any complex surface with negative first Chern class and no curves with negative self-intersection admits a cscK metric in any K\"ahler class \cite[Corollary 1.7]{CC}.
Indeed, our examples are surfaces having negative first Chern class since $K_M = \pi_L^*(K_N+pL)$, and
$$ K_N+pL = (p k -3)H - \sum_{i=1}^3(p a_i - 1)E_i $$
is ample since $k > a_i+a_j+\frac{1}{p}$ for all $i \neq j$.
On the other hand, the finiteness of the map $\pi_L$ implies that pulling back any $(-1)$-curve of $Bl_{P_1,P_2,P_3}\pr^2$ yields a curve with negative self-intersection on $M$.
\end{enumerate}


\bigskip

\bigskip

\end{document}